\theoremstyle{plain}
\newtheorem{lemma}{Lemma}
\newtheorem{theorem}{Theorem}
\newtheorem{prop}{Proposition}
\theoremstyle{definition}
\newtheorem{example}{Example}
\newcommand{\cA}{\mathcal{A}}
\newcommand{\cL}{\mathcal{L}}
\newcommand{\cF}{\mathcal{F}}
\newcommand{\cP}{\mathcal{P}}
\newcommand{\cB}{\mathcal{B}}
\newcommand{\cX}{\mathcal{X}}
\newcommand{\cC}{\mathcal{C}}
\newcommand{\cH}{\mathcal{H}}
\newcommand{\cT}{\mathcal{T}}
\newcommand{\sA}{\mathsf{A}}
\newcommand{\sO}{\mathsf{O}}
\newcommand{\real}{\mathbb{R}}
\newcommand{\RR}{\mathbb{R}}
\newcommand{\dd}{\, \mathrm{d}}
\newcommand{\E}{\mathbb{E}}
\newcommand{\Mz}{\mathbb{M}_{0}}
\newcommand{\Mf}{\mathcal{M}_{\rm f}}
\newcommand{\one}[1]{\ensuremath{\mathbbm{1}}(#1)}
\newcommand{\hsp}{\hspace{0.25mm}}
\DeclareMathOperator{\diam}{diam}
\renewcommand{\natural}{\mathbb{N}}
\newcommand{\nat}{\mathbb{N}}
\renewcommand{\P}{\mathbb{P}}
\newcommand{\suppIntensities}{S1}
\newcommand{\DMalpha}{0.05}
\newcommand{\Nsim}{100}
\newcommand{\Msim}{500}
\begin{document}

\title{Comparative evaluation of point process forecasts}

\author{Jonas R. Brehmer\thanks{Computational Statistics group, Heidelberg Institute for Theoretical Studies, Germany; e-mail:   \texttt{jonas.brehmer@h-its.org} }    \and
        Tilmann Gneiting\thanks{Computational Statistics  group, Heidelberg Institute for Theoretical Studies, and Institute for Stochastics, Karlsruhe Institute of Technology, Germany; e-mail: \texttt{tilmann.gneiting@h-its.org}}   \and
        Marcus Herrmann\thanks{Department of Earth, Environmental, and Resources Sciences, University of Naples Federico II, Italy; e-mail:
           \texttt{marcus.herrmann@unina.it}}    \and
        Warner Marzocchi\thanks{Department of Earth, Environmental, and Resources Sciences, University of Naples Federico II, Italy; e-mail:
           \texttt{warner.marzocchi@unina.it}}   \and
        Martin Schlather\thanks{Institute of Mathematics, University of Mannheim, Germany; e-mail:
           \texttt{schlather@math.uni-mannheim.de}}   \and
        Kirstin Strokorb\thanks{School of Mathematics, Cardiff University, Wales, United Kingdom; e-mail:
           \texttt{strokorbk@cardiff.ac.uk}
        }
}
\date{July 9, 2023}

\maketitle

\begin{abstract}

Stochastic models of point patterns in space and time are widely
used to issue forecasts or assess risk, and often they affect
societally relevant decisions.  We adapt the concept of consistent
scoring functions and proper scoring rules, which are statistically
principled tools for the comparative evaluation of predictive
performance, to the point process setting, and place both new and
existing methodology in this framework.  With reference to 
earthquake likelihood model testing, we demonstrate that
extant techniques apply in much broader contexts than previously
thought.  In particular, the Poisson log-likelihood can be used for
theoretically principled comparative forecast evaluation in terms
of cell expectations. We illustrate the approach in a simulation
study and in a comparative evaluation of operational earthquake
forecasts for Italy.

\end{abstract}

\textit{Keywords:}
Consistent scoring function, elicitability, forecast
evaluation, proper scoring rule, statistical seismology.

\section{Introduction}

In many situations, scientific forecasts of uncertain future
quantities provide critical input to societally relevant decision
making.  For example, criminologists develop methods for forecasts of
criminal offences \citep{Mohleretal2011, Flaxmanetal2019,ZhuaMat2019},
epidemiologists assess when and where people catch diseases
\citep{MeyerHeld2014, Schoenbetal2019}, and seismologists use
statistical models to study and forecast earthquake behaviour
\citep{Ogata1988, Ogata1998, Zhuangetal2002, BraySchoen2013}.  The
relevant events in these examples --- criminal offences, infections,
and seismic events --- occur as random point patterns in space and
time.  In probabilistic terms they are modelled as realizations of
point processes \citep{DVJvol1}.  Beyond the development of new point
process models for these phenomena, there is a growing demand for
theoretically principled evaluation methods. 

Model evaluation and forecast assessment are subjects of a vast body
of scientific literature.  Among a plethora of approaches, a simple
distinction can be made between the assessment of absolute and
relative performance.  Evaluating absolute performance, or assessing
goodness-of-fit, means checking whether the assumed model is
consistent with the data and rejecting it if this is not the case.

If
two or more models are available, it is desirable to assess their 
relative performance and check whether a model outperforms its
competitors.  Consistent scoring functions and proper scoring rules
are widely used and well-studied tools that serve this purpose, see
e.g.\ \citet{GneitRaft2007} and \citet{Gneit2011}.  The central
objective of our paper is to demonstrate that this idea and associated
statistical methods transfer to point process forecasts and,
consequently, provide practical, yet theoretically principled tools
for comparative forecast evaluation in this setting.

A scoring function or scoring rule assigns a real number to each pair
of a forecast and the respective realized observation of a random
variable $Y$.  If the forecast is expressed as a statistical property,
such as the mean or a quantile of the (possibly, implicit) predictive
distribution, this mapping is called \textit{scoring function},
whereas the term \textit{scoring rule} is used when an entire
predictive distribution is reported.  In either case, the key
requirement to be satisfied is that forecasting the truth yields the
best score in expectation: A scoring function is \textit{consistent}
for a statistical property if the value of this property for a
distribution $F$ is a minimizer of the expected score with respect to
$F$.  Likewise, a scoring rule is \textit{proper} if the expected
score with respect to $F$ is minimized by forecasting $F$.  In
addition to forecast comparison, propriety and consistency allow for
regression and $M$-estimation \citep{GneitRaft2007}.

Thus far, statistical seismology has been a driving force in the
development of methods to evaluate point process models, see e.g.\
\citet{BraySchoen2013} for a review.  In particular, the regional
earthquake likelihood models (RELM) initiative \citep{Field2007} and
its successor, the Collaboratory for the Study of Earthquake
Predictability (CSEP) \citep{Zecharetal2010b, Schoretal2018}, have set
up forecast experiments for the prospective evaluation of models based
on a number of statistical tests.  \citet[p.~518]{BraySchoen2013}
point out the connection between some of these tests and the scoring
literature by stating that ``numerical tests such as the L-test, can
be viewed as examples of scoring rules [\ldots]".  The paper by 
\citet{Heinetal2019} makes this connection explicit and derives
consistent scoring functions to compare forecasts in the point process
setting.  We complement their simulation-based approach and develop an
alternative, computationally much less intense framework, in which we
work with distributional properties for which closed form expressions
under the posited point process model are available.  This yields a
flexible approach to forecast comparison, which incorporates existing
methods, and admits new perspectives on the strengths and weaknesses
of the CSEP methodology for earthquake forecast evaluation.

The remainder of the paper is structured as follows.
Section~\ref{sec:scoring} recalls fundamentals on scoring functions
and their role in forecast evaluation and model selection.
Section~\ref{sec:scoring4PP} rigorously introduces scoring functions
for point patterns and compares to the approach of
\citet{Heinetal2019}.  The use of consistent scoring functions for the
intensity is illustrated in finite sample simulation experiments in
Section~\ref{sec:simulations}.  In Section~\ref{sec:case} we evaluate
operational earthquake forecasts for Italy and discuss how scoring
functions relate to extant methods in seismology.  The paper closes
with a discussion in Section~\ref{sec:discussion}.

The main article concentrates on scoring functions for the intensity
-- the most fundamental first order property of a point process.
Scoring functions and simulation experiments for further standard
properties such as moment measures are addressed in the Supplementary
Material.

\section{Scoring functions and forecast evaluation}  \label{sec:scoring}

The following overview of consistent scoring functions and their
role in comparative forecast evaluation is primarily based on
\citet{Gneit2011}.

Let $\sO$ and $\sA$ be subsets of a real vector space, and let $\cF$
be a collection of probability distributions on the
Borel-$\sigma$-algebra of $\sO$.  We interpret $x \in \sA$ as a
forecast in terms of a single-valued \emph{functional}
$T: \cF \rightarrow \sA$ that is to be compared to an outcome in
$\sO$.  A function $S: \sA \times \sO \rightarrow \RR$ is called
\textit{scoring function} if for all $x \in \sA$ the mapping
$S(x,\cdot)$ is $F$-integrable for all $F \in \cF$.  The literature
usually distinguishes point forecasts ($A \subseteq \RR^n$) and
probabilistic forecasts ($A = \cF$ and $T$ is the identity) and uses
the term \textit{scoring rule} in the latter setting.  We do not make
this distinction and exclusively use the term scoring function.

The key concept which motivates the use of scoring functions is
consistency, meaning that a perfect forecast should achieve the lowest
score in expectation.  Specifically, a scoring function $S$ is
\textit{consistent} for a functional $T: \cF \rightarrow \sA $ if for
all $x \in \sA$ and $F \in \cF$ we have
\begin{align} \label{eq:consistency}
\E_F S(x, Y)  \geq  \E_F S(T(F), Y),
\end{align}
where the expectation $\E_F$ refers to the random variable $Y$
following the distribution~$F$.  It is \textit{strictly consistent}
for $T$ if in addition equality in~\eqref{eq:consistency} implies
$x = T(F)$.  A central question is which functionals $T$ are
\textit{elicitable}, i.e.\ possess a strictly consistent scoring
function.  Many elicitable functionals and corresponding classes of
strictly consistent scoring functions are known, e.g.\ expectations,
quantiles, and expectiles \citep{Gneit2011, DawidMusio2014,
  FronKash2015b, FronKash2021}.  For $\sA = \cF$ the most relevant
functionals are the identity and restrictions to the tails
\citep{GneitRaft2007, GneitRanjan2011, Lerchetal2017, HolzKlar2017}.

A fundamental result is that expectations of integrable functions are
elicitable.  For instance, $\E_F (x - Y)^2$ is uniquely minimized by
$x = \E_F Y$, thus the \textit{quadratic} score $S(x,y) = (x-y)^2$ is
a strictly consistent scoring function for the expectation functional.
To state a general theorem on the elicitability of expectations
\citep{Savage1971, Gneit2011, FronKash2015b}, let
$\sA, \sO \subseteq \real^k$ and let $\nabla f(x)$ denote the
subderivative of a convex function $f: \sA \rightarrow \real^k$ at
$x \in \real^k$.  The subderivative or subgradient is a generalization
of the derivative that applies to any convex function, and the two
concepts coincide if the derivative exists \citep{Rockafellar1970}.
The function $b : \sA \times \sO \rightarrow \real$ defined by
\begin{align}  \label{eq:Bregman}
b(x,y) = - f(x) - \nabla f(x)^\top ( y -  x)
\end{align}
is called a \textit{Bregman function} for $f$.  If $f$ is strictly
convex, we call $b$ \textit{strict}.

\begin{theorem}[elicitability of expectations]
\label{thm:ElicitExpect}
Let $h : \sO \to \real^k$ be $F$-integrable for all $F \in \cF$.  Then
the functional $T : \cF \rightarrow \sA \subseteq \real^k$ defined via
\begin{align*}
T(F)  = \int h(y) \dd F(y) = \E_F \, h(Y)  
= \left( \E_F  h_1 (Y), \ldots, \E_F  h_k (Y) \right)^\top
\end{align*}
is elicitable, and consistent scoring functions
$S : \sA \times \sO \rightarrow \real$ are given by
$S(x,y) = b(x, h(y))$, where $b$ is a Bregman function.  If $b$ is
strict, then $S$ is strictly consistent for~$T$.
\end{theorem}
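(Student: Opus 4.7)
The plan is to push the expectation inside the affine second argument of the Bregman function and thereby reduce the consistency inequality to the subgradient characterization of convexity.

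First I would fix $x \in \sA$ and $F \in \cF$ and compute $\E_F S(x, Y) = \E_F b(x, h(Y))$. Since
\[
b(x, z) = -f(x) - \nabla f(x)^\top (z - x)
\]
is affine in its second argument $z$ and $h$ is $F$-integrable by hypothesis (so that $T(F) = \E_F h(Y)$ is well-defined), linearity of expectation immediately gives
\[
\E_F S(x, Y) = -f(x) - \nabla f(x)^\top \bigl( T(F) - x \bigr) = b(x, T(F)).
\]
Taking $x = T(F)$ in this identity makes the linear term vanish, yielding $\E_F S(T(F), Y) = -f(T(F))$.

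Thus \eqref{eq:consistency} reduces to showing $b(x, T(F)) \geq -f(T(F))$ for every $x \in \sA$, which after rearrangement is
\[
f(T(F)) \geq f(x) + \nabla f(x)^\top \bigl( T(F) - x \bigr).
\]
This is exactly the defining subgradient inequality for the convex function $f$ with $\nabla f(x)$ a subgradient at $x$, and so it holds by convexity. For the second claim, strict convexity of $f$ makes this subgradient inequality strict whenever $T(F) \neq x$, so equality in \eqref{eq:consistency} forces $x = T(F)$, giving strict consistency.

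There is essentially no hard step: the theorem is a short algebraic consequence of the definition of a Bregman function combined with the supporting-hyperplane property of convex functions. The only mild point to check is that $T(F) \in \sA$ so that the subgradient inequality may legitimately be evaluated at $T(F)$; this is already built into the setup via $T: \cF \to \sA$.
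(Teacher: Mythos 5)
Your proof is correct and is exactly the standard argument behind this result: the paper itself states Theorem~\ref{thm:ElicitExpect} without proof, citing \citet{Savage1971}, \citet{Gneit2011}, and \citet{FronKash2015b}, and those sources argue precisely as you do, pushing the expectation through the affine second argument of $b$ and invoking the subgradient inequality (strict under strict convexity) for $f$. The only additions worth a word are that elicitability follows because a strictly convex $f$ (e.g.\ $f(x)=\Vert x\Vert^2$) always exists, and that $F$-integrability of $h$ also guarantees $S(x,\cdot)$ is $F$-integrable, so $S$ is indeed a scoring function.
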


In general, bijective transformations of the domain $\sA$ preserve the
elicitability of a functional, a fact which is usually called
\textit{revelation principle} \citep[Theorem~4]{Gneit2011}.  Likewise,
if we consider transformations of the observation domain $\sO$, we can
state the following simple result, which resembles, but differs from,
findings on weighted functionals as discussed in
\citet{GneitRanjan2011} and \citet[Theorem~5]{Gneit2011}.  The proof
is a straightforward consequence of integration with respect to the
pushforward measure and thus omitted.

\begin{prop}[transformation principle] \label{prop:TrafoPrinciple} Let
  $T: \cF \to \sA$ be an elicitable functional and
  $S: \sA \times \sO \to \real$ a (strictly) consistent scoring
  function for $T$.  Let $g: \sO' \rightarrow \sO$ be measurable, and
  let $\cF'$ be a set of distributions on $\sO'$, such that
  $\{ F' \circ g^{-1} \mid F' \in \cF' \} \subseteq \cF$.  Then the
  functional $T': \cF' \to \sA$ defined via
  $T'(F') := T( F' \circ g^{-1} )$ is elicitable with (strictly)
  consistent scoring function $S'(x,y) = S(x, g(y))$.
\end{prop}

In case multiple forecasts in terms of an elicitable functional $T$
are available, their predictive performance can be assessed in a
natural way:  If $S$ is a strictly consistent scoring function for
$T$, then a forecast is considered superior to its competitor if it
achieves a lower expected score with respect to $S$.  This allows for
a choice between two forecasts based on their difference in expected
scores, without further assumptions on the data-generating process.

To illustrate the idea, we introduce a simple point process scenario,
which is motivated by our earthquake forecasting case study
(Section~\ref{sec:case}).  Let $\Phi$ be a spatial point process which
models the locations of earthquake epicentres in a specified region
during a period of seven days.  Let $S$ be a scoring function such
that $S(r, \Phi)$ is the score of the forecast report $r \in \sA$, and
assume that $S$ is strictly consistent for a statistical property of
point processes, e.g.\ the intensity measure (see
Section~\ref{subsec:Intensity}).  In this situation, two intensity
forecasts $r$ and $r^*$ can be compared based on
$\E \left( S(r, \Phi) - S(r^*, \Phi) \right)$, where, due to the
consistency of $S$, negative values support $r$, while positive values
support $r^*$.

In typical applications we face forecasts $r_t$, $r_t^*$ and
corresponding realizations $\Phi_t$ of the point process for time
points $t = 1, \ldots, N$.  With these values, the expected score
difference can be estimated via the realized average score difference.
Substantial deviations from zero then indicate differences in the
predictive performance of the forecast sequences $(r_t)$ and
$(r^*_t)$.  To estimate the uncertainty inherent in the score
differences it is common to use the Diebold--Mariano test
\citep{DiebMari1995} or extensions of this testing framework, see
e.g.\ \citet{NoldeZieg2017} and \citet{HerGent2011}.

Although we here focus on the specific scenario of a discretely
observed spatial point process, strictly consistent scoring functions
can be used in many other point process settings, as discussed in
Section~S1 of the Supplementary Material.

\section{Consistent scoring functions for point patterns}  \label{sec:scoring4PP}

We now turn our attention to the situation, where each observation is
a finite point pattern.  We first connect to existing theory
(Section~\ref{sec:scoring}) and then derive scoring functions for the
distribution and the intensity measure. Scoring functions for further
point process characteristics are discussed in Section~S2 of the
Supplementary Material.

\subsection{Technical context}

We follow the common convention that a finite \textit{point process}
$\Phi$ is a random element in the space $\Mz = \Mz(\cX)$ of finite
counting measures on the Borel set $\cX \subseteq \RR^d$ and refer to
\citet{DVJvol1} for details.  We denote a set of probability measures
on $\Mz$ by $\cP$ and the distribution of $\Phi$ by $P_\Phi$.  Any
forecast is issued for a \textit{functional}
$\Gamma : \cP \rightarrow \sA$ and is to be compared to an outcome in
$\Mz$.  We call a mapping $S : \sA \times \Mz \to \real$ a
\textit{scoring function} if
$\E_P S(a, \Phi) = \int S(a,\varphi) \dd P(\varphi)$ exists for all
$a \in \sA$ and $P \in \cP$.  \textit{Elicitability} of $\Gamma$ as
well as \textit{(strict) consistency} of $S$ is then defined as above
via inequality~\eqref{eq:consistency}, i.e.\ $S$ is strictly
consistent for $\Gamma$ if
$\E_P S(a, \Phi) \geq \E_P S(\Gamma(P), \Phi)$ for all $a \in A$ and
$P \in \cP$ and equality implies $a = \Gamma(P)$.  For ease of
presentation and practical implementation, we will usually state how
the score of a realization
$\varphi = \sum_{i = 1, \dots, n} \delta_{y_i} \in \Mz$ is computed
from an enumeration of its points, i.e.\ from the set
$\{ y_1, \ldots, y_n \}$, where $n = \vert \varphi \vert$ is the total
mass of the counting measure $\varphi \in \Mz$.   To make this
meaningful, we will ensure that for spatial processes all scoring
functions are independent of the enumeration of points
\citep[Chapter~5]{DVJvol1}.

In light of Theorem~\ref{thm:ElicitExpect}, constructing simple
examples for elicitable functionals of point processes is
straightforward: Point processes induce real-valued random variables
in many ways and the expectations of these random variables (provided
they are finite) will be elicitable functionals.

\begin{example}[expected number of points]
\label{ex:SimpleProperty}
Given a set $B \in \mathcal{B}(\cX)$, the ($\natural_0$-val\-ued) random
variable $\Phi(B)$ denotes the number of points of $\Phi$ in $B$.
According to Theorem~\ref{thm:ElicitExpect} the functional
$\Gamma_B: \cP \to \real$ given by $\Gamma_B (P)= \E_P \Phi (B)$ is
elicitable with Bregman scoring function
\begin{align*}
S_B(x, \varphi) = b(x, \varphi(B)) = - f(x) - \nabla f(x)^\top 
(\varphi(B) -  x),
\end{align*}
where $f : [0, \infty) \to \real$ is a strictly convex function.
\end{example}

This construction is not limited to the expected number of points in a
set, but works for any combination of elicitable functional (e.g.\
expectation) and point process feature (e.g.\ number of points): Let
$\sO$ be an observation domain and $g : \Mz \to \sO$ a measurable
mapping.  The transformation principle
(Proposition~\ref{prop:TrafoPrinciple}) then implies that the
functional $\Gamma (P) := T (P \circ g^{-1})$ is elicitable whenever
$T : \lbrace P \circ g^{-1} \mid P \in \cP \rbrace \to \sA$ is
elicitable.  We recover Example~\ref{ex:SimpleProperty} by choosing
$T(F) = \E_F Y$ and $g(\varphi) = \varphi (B)$.  The elicitability of
other ``simple" properties such as finite-dimensional distributions
and void probabilities is a straightforward consequence of
Proposition~\ref{prop:TrafoPrinciple} and deferred to the
Supplementary Material.

Different choices for $T$ and $g$ in
Proposition~\ref{prop:TrafoPrinciple} lead to a wide variety of
different functionals and consistent scoring functions.  The core idea
in \citet{Heinetal2019} is to choose $T$ as the identity on
$\lbrace P \circ g^{-1} \mid P \in \cP \rbrace$.  Two distributional
models $P, Q \in \cP$ of the process $\Phi$ can then be compared based
on realizations by comparing $P \circ g^{-1}$ and $Q \circ g^{-1}$ via
a consistent scoring function for distributions.  The mapping
$g: \Mz \to \sO$ is selected to be an estimator of some quantity of
interest, e.g.\ a kernel-based intensity estimator.  Since the
distributions of such estimators will usually not be explicitly
available, approximating the scoring functions via simulations becomes
necessary.  Moreover, as different $P \in \cP$ may lead to the same
law $P \circ g^{-1}$, this approach hinges on the ability of $g$ to
discriminate between two distributions $P$ and~$Q$.

Instead of following this approach, we focus on common point process
characteristics $\Gamma : \cP\to \sA$ and develop strictly consistent
scoring functions for them.  This allows for a direct comparison of
the characteristic $\Gamma$, which includes distributional models
$P \in \cP$ as a special case. In contrast, comparison in
\citet{Heinetal2019} always depends on specific aspects of the
distributions in $\cP$ which are determined via the estimator choice
$g$.  This arguably leads to a good discrimination ability, as the
whole point process distribution is taken into account, whereas
comparison in our approach focuses on how similar the property values
$\Gamma (P)$ and $\Gamma (Q)$ (e.g.\ the intensity measures) are.
However, this also means that knowledge of the distribution $P$ is not
needed in our setting, as long as $\Gamma(P)$ is available.  In cases
where $\Gamma$ can be computed explicitly for models in $\cP$, this
avoids point process simulations, which might be prohibitive in
routine applications.  Furthermore, this simplifies reporting, since
forecasters do not need to come up with a fully specified point
process distribution.  For these reasons the methodology proposed here
complements the approach developed by \citet{Heinetal2019}, and which
is more suitable depends on the setting at hand.

\subsection{Distribution and density}  \label{subsec:Density_general}

In this subsection we construct consistent scoring functions for the
identity functional $\Gamma = \mathrm{id}_\cP$, i.e.\ for the entire
point process distribution.  To this end we need to specify how we
represent the law $P_\Phi$ of the finite point process $\Phi$ on
$\cX$.  One way to do so is via sequences $(p_k)_{k \in \natural_0}$
and $(\Pi_k)_{k \in \natural}$.  Each $p_k$ specifies the probability
of finding $k$ points in a realization, and $\Pi_k$ are symmetric
probability measures on $\cX^k$ which describe the distribution of any
ordering of points, given $k$ points are realized
\citep[Chapter~5.3]{DVJvol1}.  Although this representation already
allows for the construction of consistent scoring functions for
$P_\Phi$, we focus on the case where densities are available, since
these are often more convenient to deal with, especially when
multivariate distributions are of interest.

\citet{GneitRaft2007} formalize density forecasting as follows:  Let
$(\Omega , \cA,$ $\mu)$ be a $\sigma$-finite measure space and for
$\alpha > 1$ let $\cL_\alpha$ consist of all (equivalence classes of)
densities $p$ of probability measures $P$ that are absolutely
continuous with respect to $\mu$ and such that $ \Vert p \Vert_\alpha
:= \left( \int_{\Omega} p(\omega)^\alpha  \dd \mu (\omega )
\right)^{1/ \alpha}$ is finite.  In this setting, important examples
of strictly consistent scoring functions $S: \cL_\alpha \times \Omega
\to\real$ are the \textit{pseudospherical} and the
\textit{logarithmic} score, defined via
\begin{align}   \label{eq:Pseudo_and_Log}
\mathrm{PseudoS} (p, \omega) = - p(\omega)^{\alpha -1} / 
\Vert p \Vert_\alpha^{\alpha -1} 
\quad \text{ and } \quad
\mathrm{LogS}(p, \omega) = - \log p(\omega) ,
\end{align}
respectively. The logarithmic score is the (appropriately scaled)
limiting case of the pseudospherical score as $\alpha \to 1$.

Returning to point processes we follow \citet[Chapters~5.3 
and~7.1]{DVJvol1} and let $P_0$ denote the distribution of the Poisson
point process with unit rate on some bounded domain
$\cX \subset \real^d$.  If $P \in \cP$ is absolutely continuous with
respect to $P_0$, then the Radon-Nikod\'ym density
$\mathrm{d}P / \mathrm{d}P_0$ exists and can be regarded as the
density of $P$.  It can be computed via the identity
\begin{align*}
\frac{\mathrm{d}P}{\mathrm{d}P_0} (\varphi) = \exp( \vert \cX \vert ) \, \frac{j_k (y_1, \ldots, y_k)}{k!},
\end{align*}
where $\vert \cX \vert$ denotes the Lebesgue
measure of $\cX$, $y_1, \ldots, y_k$ are the points of
$\varphi \in \Mz$, and the (symmetric) function $j_k$ given by
\begin{align}  \label{eq:Janossy}
j_k (x_1, \ldots, x_k) \dd x_1 \cdots \dd x_k = k! p_k \dd 
\Pi_k (x_1, \ldots, x_k)
\end{align}
is the $k$-th \textit{Janossy density} of $\Phi$. For $k=0$
this is interpreted as $j_0 = p_0$. The value $j_k (x_1, \ldots, x_k)$
can be understood as the \textit{likelihood} of $k$ points
materializing, one of them in each of the distinct locations $x_1,
\ldots, x_k \in \cX$.

In principle, plugging the Janossy densities
into~\eqref{eq:Pseudo_and_Log} allows us to obtain scoring functions
for the point process distribution $P$.
However, two important difficulties need to be addressed in the point
process setting.  First, explicit expressions for
$(j_k)_{k \in \natural_0}$ are usually hard to determine and known
only for some models, see \citet[Chapter~7.1]{DVJvol1} and
Example~\ref{ex:PoissonLikelihood} below.  Second, even if explicit
expressions are available, calculating the norm
$\Vert \mathrm{d} P / \mathrm{d} P_0 \Vert_\alpha$ amounts to
computing
$ (k!)^{-1} \int j_k(x_1, \ldots, x_k)^\alpha \dd x_1 \ldots \dd x_k$
for all $k \in \natural$, which may be prohibitive.  This complicates
the use of scoring functions relying on $\Vert \cdot \Vert_\alpha$,
such as the pseudospherical score~\eqref{eq:Pseudo_and_Log}.  We will
thus only consider the logarithmic score here, and discuss a further
choice in the Supplementary Material.

Assume that for all distributions $Q \in \cP$ the corresponding
Janossy densities 
$(j_k^Q)_{k \in \natural_0}$ are well-defined.  Due to the strict
consistency of the logarithmic score, the
function $S: \cP \times \Mz \to \real$ defined via
\begin{align}   \label{eq:PPlogScore}
S( (j_k^Q)_{k \in \natural_0}, \{y_1, \ldots, y_n\} ) 
= - \log (j_n^Q (y_1, \ldots , y_n) )
\end{align}
for $n \in \natural$ and
$S( (j_k^Q)_{k \in \natural_0}, \emptyset ) := - \log (j_0^Q)$ is a
strictly consistent scoring function for the distribution of the point
process $\Phi$.  The term $- \vert \cX \vert + \log (n!)$ can be
omitted, since it is independent of the forecast report
$(j_k^Q)_{k \in \natural_0}$.  This choice recovers the log-likelihood
of the point process distribution $Q$ from the perspective of
consistent scoring functions.

\begin{example}[Poisson point process]  \label{ex:PoissonLikelihood}
Let $\Phi$ be an inhomogeneous Poisson point process with intensity
$\lambda : \cX \to [0, \infty)$.  It is well-known that $\Phi$ admits
the densities
\begin{align*}
j_{n, \lambda} (y_1, \ldots, y_n) = \Big( \prod_{i=1}^{n} 
\lambda (y_i) \Big)  \exp \left( - \int_\cX \lambda (y) \dd y \right)
\end{align*}
for $n \in \nat$.  In case $n = 0$ the product is interpreted as one.
When reporting the Poisson point process distribution $P_\Phi$ via its
Janossy densities, \eqref{eq:PPlogScore} gives the score
\begin{align}  \label{eq:Poisson_logScore}
S( P_\Phi, \{y_1, \ldots, y_n\} ) =  - \sum_{i=1}^{n} \log 
\lambda (y_i) + \int_\cX \lambda (y)  \dd y 
\end{align}
for $n \in \natural$ and $S(P_\Phi, \emptyset) = \int_\cX \lambda (y) \dd y$.
\end{example}

Before turning to the intensity measure, we briefly discuss temporal
point processes, which demand a special treatment since the dimension
``time" possesses a natural ordering.  The instantaneous rate of
points occurring in the point process $\Phi$ is usually described via
the \textit{conditional intensity}
\begin{align}  \label{eq:condintensI}
\lambda^* ( t ) = \lim_{\Delta t \to 0} \frac{ \E \left( \Phi ( 
( t, t + \Delta t) ) \mid \cH_t \right)}{\Delta t} ,
\end{align}
where $(\cH_t)_{t \in \real}$ is the filtration generated by the
history of $\Phi$ \citep[Chapter~7]{Rein2018,DVJvol1}. Although 
$\lambda^*(t)$ is random, it is deterministic conditional on $\Phi$,
thus a measurable mapping linking it to $\Phi$ allows for modelling as
well as evaluation via consistent scoring functions.

Specifically, let $\Phi$ be a point process on $\real$ and consider an
observation window $\cX := [0,T]$ for some $T >0$.  Given a
realization $0 < t_1 < \ldots < t_n$ of $\Phi$ the realized values of
the conditional intensity can be computed for all $t \in \cX$.  More
precisely, for a $t\in \cX$ with $t_1 < \ldots < t_i \le t < t_{i+1}$
we denote the realized value of $\lambda^*$ at $t$ via
$\lambda^*(t \mid t_1, \ldots, t_i )$.  Since the collection of all
mappings $t \mapsto \lambda^*(t \mid t_1, \ldots, t_i)$ for all
$i=0, \ldots, n$ and all possible realizations $t_1, \ldots, t_n$
uniquely determines the distribution of $\Phi$ \citep{DVJvol1},
comparing forecasts for the conditional intensity is equivalent to a
comparison of forecasts for the distribution.  This connection is made
explicit by the representation of the likelihood of
$t_1 , \ldots, t_n$ occurring in $[0,T]$ via
\begin{align}    \label{eq:temporal_likelihood}
j_n (t_1, \ldots, t_n ) = \Big( \prod_{i=1}^{n} 
\lambda^* (t_i) \Big) \exp \left( - \int_{0}^{T} \lambda^* (u) 
\dd u \right) ,
\end{align}
where the product is interpreted as one if no points occur.
Consequently, (strictly) consistent scoring functions for the
conditional intensity can be obtained by arguments similar to above.

\begin{example}[Recovery of log-likelihood of a temporal point process] \label{ex:condIntensity_log}
  Plugging~\eqref{eq:temporal_likelihood} into the logarithmic
  score~\eqref{eq:PPlogScore} we see that the scoring function
\begin{align*}
S(\lambda^*, \{t_1, \ldots, t_n \} ) = - \sum_{i=1}^{n}  \log 
\left( \lambda^* (t_i) \right) + \int_{0}^{T} \lambda^* (u ) \dd u ,
\end{align*}
is strictly consistent for the conditional intensity. This recovers
the log-likelihood of a temporal point process
\citep{DVJvol1,Rein2018}. If $\Phi$ is a Poisson point process on
$\real$, its conditional intensity $\lambda^*$ agrees with its
intensity $\lambda$, and $S$ coincides
with~\eqref{eq:Poisson_logScore}.
\end{example}

\subsection{Intensity measure}  \label{subsec:Intensity}

One of the key characteristics of a point process $\Phi$ is its
intensity measure $ \Lambda: B \mapsto \E \Phi (B)$ that quantifies
the expected number of points in any set $B \in \cB(\cX)$
\citep{DVJvol1,Chiuetal2013}.  Analogous to the first moment of a
univariate random variable, it describes the average behaviour of the
point process $\Phi$.  For a fixed Borel set $B$, we have already
identified the expected number of points $\Lambda(B)=\E \Phi (B)$ as
an elicitable functional (Example~\ref{ex:SimpleProperty}).  Here we
focus on constructing scoring functions for the full measure $\Lambda$
as a functional on $\cP$ with values in a set of finite measures $\Mf$
on $\cX$. To this end, we call
$\Lambda^* := \Lambda / \vert \Lambda \vert$, where
$\vert \Lambda \vert := \Lambda(\cX)$ is the total mass of $\Lambda$,
the \textit{normalized measure} of a finite measure $\Lambda \in \Mf$.

\begin{prop}  \label{prop:NormalizedIntens}
Set $\cF := \{ \Lambda^* \mid \Lambda \in \Mf \}$ and let $S' : \cF
\times \cX \rightarrow \real$ be a (strictly) consistent scoring
function for $\mathrm{id}_\cF$. Let $b : [0, \infty) \times 
[0, \infty) \rightarrow \real$ be a (strict) Bregman
function, as in~\eqref{eq:Bregman}. The scoring function
$S : \Mf \times \Mz \to \real$ defined via
\begin{equation*}
S(\Lambda , \lbrace y_1, \ldots, y_n \rbrace) :=  \sum_{i=1}^{n} 
S'(\Lambda^*, y_i)  + c  b \big( \vert \Lambda \vert, n \big) 
\end{equation*}
for $n \in \natural$ and $S(\Lambda, \emptyset) = c b(\vert \Lambda
\vert , 0)$ for $c > 0$, is consistent for the intensity measure. It is
strictly consistent if $S'$ is strictly consistent and $b$ is strict.
\end{prop}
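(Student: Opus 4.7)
The plan is to reduce the inequality defining (strict) consistency for $S$ to the analogous inequality for $S'$ by invoking Campbell's formula, which says that for any finite point process $\Phi$ with intensity measure $\Lambda$ and any measurable integrand $h: \cX \to \real$ for which the expectation exists,
\begin{equation*}
\E_P \sum_{y \in \Phi} h(y) = \int_\cX h(y) \dd \Lambda(y).
\end{equation*}
This identity is exactly the device that lets me turn a sum over the random points of $\Phi$ (the structure of $S$) into an integral against the deterministic intensity measure $\Lambda$.

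First I fix an arbitrary report $G \in \cF$ and a law $P \in \cP$ with intensity measure $\Lambda$ (assumed finite, so $G^P := \Lambda^*$ lies in $\cF$). Applying Campbell's formula with $h(y) = S'(G,y)$ gives
\begin{equation*}
\E_P S(G, \Phi) = \int_\cX S'(G, y) \dd \Lambda(y) = |\Lambda| \int_\cX S'(G, y) \dd \Lambda^*(y) = |\Lambda| \, \E_{\Lambda^*} S'(G, Y),
\end{equation*}
where $Y \sim \Lambda^*$. The empty-realization case is handled consistently because we set $S(\Lambda^*, \emptyset) = 0$ and the empty sum is zero, so the identity holds on $\{\Phi = \emptyset\}$ as well.

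Next I invoke the consistency of $S'$ for the identity on $\cF$: since $\Lambda^* \in \cF$,
\begin{equation*}
\E_{\Lambda^*} S'(G, Y) \geq \E_{\Lambda^*} S'(\Lambda^*, Y).
\end{equation*}
Multiplying by $|\Lambda| \geq 0$ and applying Campbell's formula in reverse yields $\E_P S(G, \Phi) \geq \E_P S(\Lambda^*, \Phi)$, which is the consistency inequality.

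For strict consistency, if $S'$ is strictly consistent then equality in the displayed inequality above forces $G = \Lambda^*$, provided $|\Lambda| > 0$. The only mild obstacle I anticipate is the edge case $|\Lambda| = 0$: then $\Phi = \emptyset$ almost surely and every $G$ yields the same expected score $0$, so strict consistency only holds on the subclass where $|\Lambda| > 0$; I would state this caveat briefly (or fold it into the standing assumption that $\Mf$ consists of nonzero measures) rather than dwell on it. Everything else is a direct calculation once Campbell's formula is in place.
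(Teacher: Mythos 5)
Your proof is correct and follows essentially the same route as the paper: Campbell's theorem converts the expected score into an integral against $\Lambda$, normalization by $\vert \Lambda \vert$ reduces the claim to the (strict) consistency of $S'$ for $\mathrm{id}_\cF$. Your extra remark about the degenerate case $\vert \Lambda \vert = 0$ is a sensible caveat that the paper's proof leaves implicit.
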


\begin{proof}
Let $W \in \Mf$ and $\Phi$ be a point process with intensity measure
$\Lambda \in \Mf$ and distribution $P \in \cP$. The difference in
expected scores is 
\begin{align*}
\E_P \left[ S(W, \Phi) - S(\Lambda, \Phi) \right]
&= \int \sum_{x \in \varphi} S'(W^*, x) - S'(\Lambda^*, x) 
\dd P (\varphi)  \\
&\phantom{=}+ c \E_P \left( b( \vert W \vert, \vert \Phi \vert) - b(\vert \Lambda \vert, \vert \Phi \vert ) \right) 
\end{align*}
and the last term is nonnegative since $b$ is a Bregman
function. Using Campbell's theorem, the second expression equals
\begin{align*}
\int_\cX S'(W^*, x) -S'(\Lambda^*, x) \, \mathrm{d} \Lambda (x) 
= \vert \Lambda \vert \int_\cX S'(W^*, x) -S'(\Lambda^*, x) \dd 
\Lambda^* (x) ,
\end{align*}
and is also nonnegative, due to the consistency of $S'$. If the score
difference is zero, $b$ is strict, and $S'$ is strictly consistent,
this gives $W^* = \Lambda^*$ and
$\vert W \vert = \vert \Lambda \vert$, showing that $S$ is strictly
consistent for the intensity measure. 
\end{proof}

In principle, it is possible to define scoring functions which only
depend on normalized measures, by using arguments in
\citet{HendBueh1971} who discuss a connection to homogeneous functions
on the cone induced by a set of probability measures.  As we are
interested in the full intensity measure, we combine the total mass
$\vert \Lambda \vert = \E \Phi (\cX)$, which is an elicitable property
of $\Phi$ (Example~\ref{ex:SimpleProperty}), with $\Lambda^*$ to
obtain a consistent scoring function.

\begin{example} \label{ex:intens_log_and_quad} As an important special
  case, assume that each $\Lambda \in \Mf$ admits a density $\lambda$
  with respect to Lebesgue measure.  Using the common quadratic score
  for $b$ and the logarithmic score~\eqref{eq:Pseudo_and_Log} for
  $S'$, the strictly consistent scoring function of
  Proposition~\ref{prop:NormalizedIntens} becomes
\begin{align*}
S(\Lambda, \{ y_1, \ldots, y_n\} ) = - \sum_{i=1}^{n} 
\log (\lambda (y_i) ) + n \log \vert \Lambda \vert + c \, 
( \vert \Lambda \vert - n)^2
\end{align*}
for some $c > 0$. Simulation experiments in
Section~\ref{sec:simulations} illustrate how $S$ can be used to
compare intensity forecasts.
\end{example}

The choice of the constant $c > 0$ in
Proposition~\ref{prop:NormalizedIntens} is irrelevant for (strict)
consistency of the scoring function $S$.  However, since $S$ evaluates
both the shape and the total mass of the intensity, judicious choices
of $c$ serve to balance the scoring components.

\section{Simulation study} \label{sec:simulations}

In this section we investigate finite sample properties of scoring
function-based model evaluation via mean score differences, with focus
on intensity forecasting for spatial point processes.  All
calculations are performed with \textsf{R} \citep{RCoreTeam},
including point process simulations with the \texttt{spatstat} package
\citep{BaddTurner2005, Baddeleyetal2015}.

We compare different intensity reports for a point process $\Phi$ on
the window $[0,1]^2$ based on $N \in \nat$ realizations, where $N$
could reflect a number of different time windows, e.g.\ $N = 52$ for
one year of weekly data.  We draw $N = \Nsim$ i.i.d.\ samples
$\varphi_i$ from $\Phi$ and use the mean score
\[
\bar s_j := \frac 1N \sum_{i=1}^{N} S(f_j, \varphi_i)
\]
as an estimator of the expected score $\E S(f_j, \Phi)$ of a given
forecast intensity $f_j$ in the population.  We use the scoring
function $S$ from Example~\ref{ex:intens_log_and_quad} with scaling
factor $c = 1 / 10$ such that the logarithmic and squared terms vary
at the same order of magnitude.  The simulations are repeated $M =
\Msim$ times to assess the variation in mean scores.

We consider four different data-generating processes for $\Phi$, all
of which have (approximate) intensity
$f_0(x,y) = 6 \sqrt{x^2 + y^2}$, which leads to four different
simulation experiments.  In the first experiment $\Phi$ is an
inhomogeneous Poisson point process.  In the second $\Phi$ is a
determinantal point process (DPP) with Gaussian covariance such that
its points exhibit moderate inhibition.  In the remaining two
simulation experiments $\Phi$ inclines to clustering.  For the third
one, we choose a log-Gaussian Cox process (LGCP) with exponential
covariance and log-expectation $\mu$ such that its intensity equals
$f_0$.  In the last experiment $\Phi$ is an inhomogeneous Thomas
process, i.e.\ a cluster process which arises from an inhomogeneous
Poisson process as parent and a random number of cluster points which
are drawn from a normal distribution centered at its parent point.
Due to this clustering, the intensity of the Thomas process is only
approximately equal to $f_0$.  For details on the processes see
\citet{Lavetal2015}, \citet[Chapter~6]{Illianetal2008} and Section~S3
of the Supplementary Material.

The study compares six different intensity forecasts, namely, $f_0$ and
\begin{align*}
f_1 (x,y) &= 7.8 \sqrt{(x-0.2)^2 + (y-0.1)^2}, \\
f_2 (x,y) &= 2.3 (x + 3y), \\
f_3 (x,y) &= 10  \sqrt{(x-0.2)^2 + (y-0.1)^2}, \\
f_4 (x,y) &=  7.5 \exp \left[ - 3  \left\{ \left(x- 0.6 \right)^2 + \left(y- 0.6 \right)^2 \right\} \right], \\
f_5 (x,y) &= 2 \left\{ \frac{1}{\sqrt{1.2 - x}} + 2(1-y)  \right\}.
\end{align*}
These choices are motivated as follows.  Intensity $f_1$ has the
correct shape, up to a small shift, and $f_3$ is a version of $f_1$
with too high total mass.  Intensity $f_2$ is similar to $f_0$ but
linear, while $f_4$ and $f_5$ have completely different shape, as
illustrated by Figure~\suppIntensities\ in the Supplementary Material.
Except for $f_3$, all intensities put roughly identical mass on
$[0,1]^2$.  This allows for an assessment of how the scoring function
reacts to misspecifications in shape instead of total mass.

Figure~\ref{fig:simALL4} shows the mean score differences between
the five different forecasts $f_1, \ldots, f_5$ and the optimal
forecast $f_0$ for all experiments.  The four experiment show a
similar pattern, namely $f_1$ is close to the optimal forecast, $f_2$
and $f_3$ less so, and the mean score differences of the misspecified
functions $f_4$ and $f_5$ are far from zero.  The fourth experiment
shows an increase in variance which likely stems from the strong
clustering tendency of the process.  Moderate clustering or
inhibition, as present in the third and second experiment, seem to
have almost no impact on the score differences.  Overall, varying the
intensity forecasts leads to pronounced differences in realized
average scores, highlighting differences in forecast performance.
Further experiments with different scoring functions as well as tests
for superior predictive ability are given in Section~S3 of the
Supplementary Material.

\begin{figure}[hbt]
\centering
\includegraphics[clip, width = \textwidth, trim = 0 10 0 0]{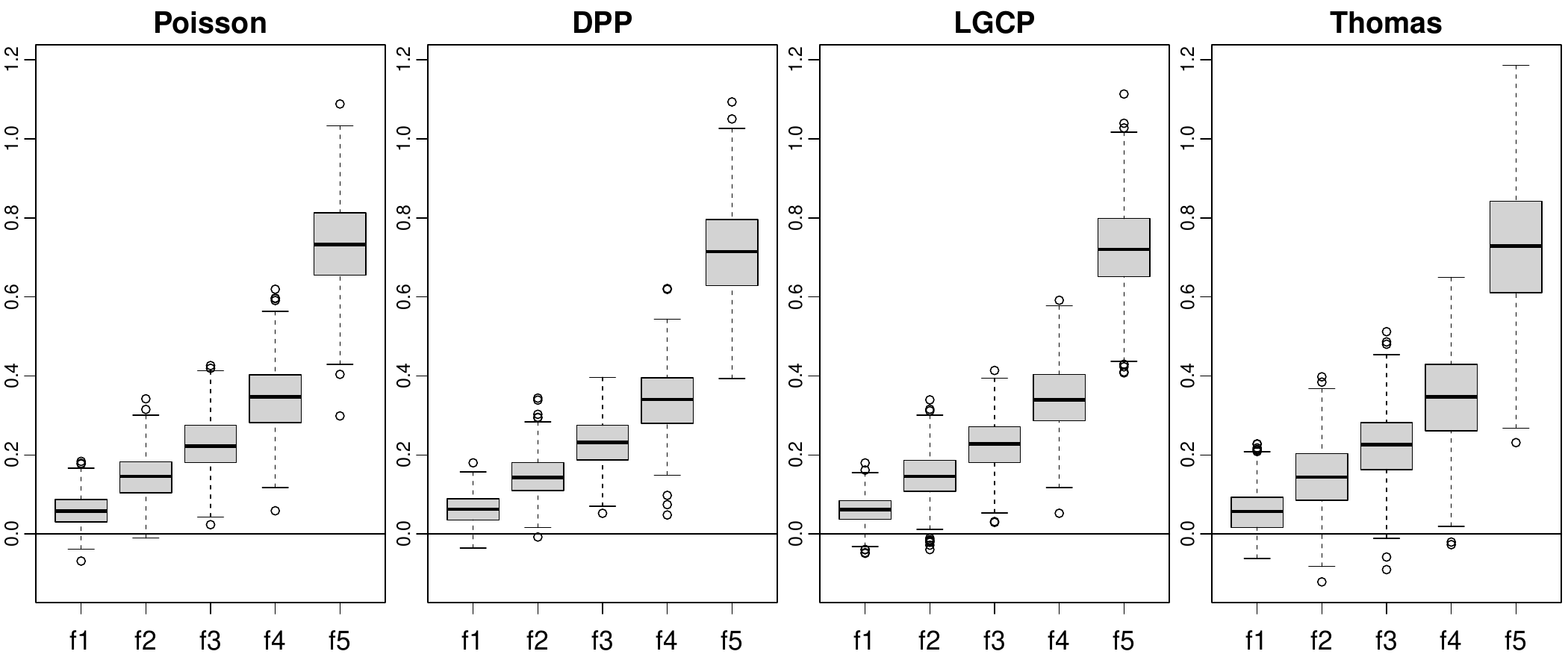}
\caption{Boxplots of the difference in mean scores $\bar s_j - \bar s_0$
for $j = 1, \ldots, 5$ and the scoring function $S$ from
Example~\ref{ex:intens_log_and_quad}.  From left to right, $\Phi$ is a
Poisson point process, a Gaussian determinantal point process, a
log-Gaussian Cox process, and an inhomogeneous Thomas process.  Means
are based on $N = \Nsim$ realizations, boxplots on $M = \Msim$
replicates.
\label{fig:simALL4}}
\end{figure}

\begin{figure}[htb]
\centering
\includegraphics[clip, width = 0.49\textwidth, trim = 0 0 0 20]{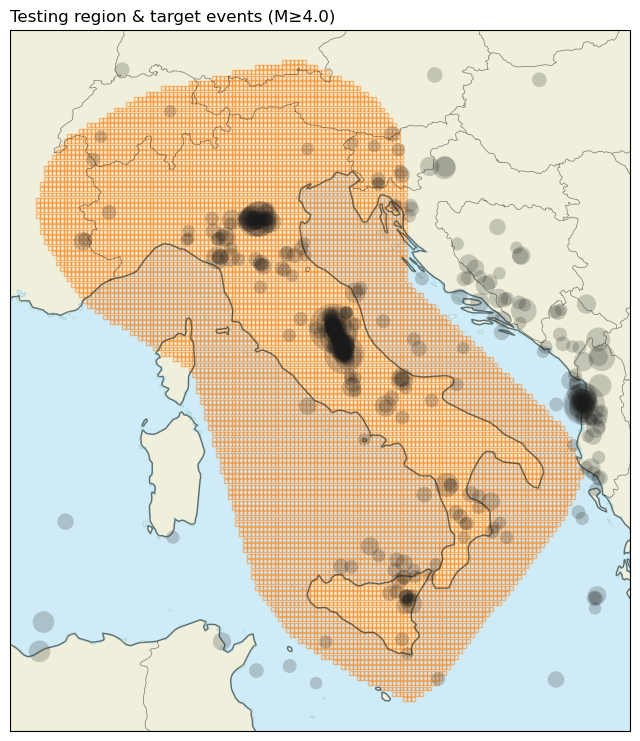}
\caption{Testing region of the Italian CSEP experiment. Gray circles represent locations of M4+ earthquakes. Figure reproduced from \citet{HerrMarz2023}.
\label{fig:testing_region}}
\end{figure}

\section{Case study: Earthquake forecasting}  \label{sec:case}

In this case study we illustrate how consistent scoring functions can
be used to compare earthquake forecasting models, and we shed new
light on extant evaluation methods in seismology.  All calculations
are performed with \textsf{R} \citep{RCoreTeam}.

\subsection{Earthquake forecasting experiments}

Over the past decades it has become consensus that earthquake
forecasts ought to be probabilistic, i.e.\ instead of specifying
whether or not an earthquake will occur, they provide a respective
predictive distribution or aspects thereof \citep{Jordanetal2011}.
Satistical models to issue such forecasts are based on spatio-temporal
point processes.  They are usually specified via a conditional
intensity (see~\eqref{eq:condintensI}) that exhibits self-exciting
behaviour, reflecting the conjecture that earthquakes trigger each
other and cluster in space and time. 
An important example is the
epidemic-type aftershock sequence (ETAS) model, see e.g.\
\citet{KagKnop1987} and \citet{Ogata1988,Ogata1998}.

The Collaboratory for the Study of Earthquake Predictability (CSEP, see Introduction)
evaluates earthquake forecasts prospectively in several regional
testing centers with standardized testing routines.  The prospective
approach uses only forecasts submitted in real time before the
respective outcomes are realized, which guarantees independence of the
forecasts from actual observations.  An important part of these
routines is the earthquake likelihood model testing approach of
\citet{KaganJack1995} and \citet{Schoretal2007}, which we discuss in
Section~\ref{subsec:RELM}.  Our case study relies on data from the
operational earthquake forecasting system in Italy (OEF-Italy,
\citet{Marzocchietal2014}), which is based on the three independent
short-term forecasting models that were tested prospectively in a
CSEP testing center for the Italian testing region
\citep{Taronietal2018}.  See Figure~\ref{fig:testing_region} for an
illustration.

The three independent models comprise LM \citep{LombMarz2010} and FMC
\citep{Falconeetal2010}, which are ETAS-based models with distinct
structure and calibration choices, and LG \citep{Woessneretal2010},
which is based on the short-term earthquake probability (STEP) model
of \citet{Gerstetal2005} and composed of sub-models.  We refer to the
original references for more details about the individual models.
OEF-Italy also includes an aggregated or ensemble forecast, namely, SMA, which
predicts a weighted average of the above three models using the
score model averaging (SMA) rule \citep{Marzetal2012}, with models being weighted
 inversely proportional to the log-likelihood of observed
data.  The SMA model is updated continuously based on new observations and
was successfully applied to track the evolution of
the recent earthquake sequence in central Italy in real time 
\citep{Marzocchietal2017}.

% from 16.04.2005 to 26.05.2020
Our study considers earthquakes of magnitude greater or equal to four
(M4+) between April 2005 and May 2020 (5520 days) that fall into the
Italian CSEP testing region (Figure~\ref{fig:testing_region}).  The
testing region is divided into 8993 grid cells.  On each day, the four
models produce forecasts for the expected number of M4+ earthquakes in
the subsequent seven-day period for each grid cell.  The forecasts are
thus nonnegative values $x_{i,t}^{(j)}$ where $j$ denotes the model,
$i$ the cell, and $t$ the day.  They can then be compared to the
observed number of events in each cell for that upcoming week.  Since
the forecasts concern seven-day periods, this number is only known
seven days after a forecast was issued.  For the same reason, the
number of days available for evaluation reduces to 5514.

\begin{figure}[htb]
\centering
\includegraphics[width = .9\textwidth]{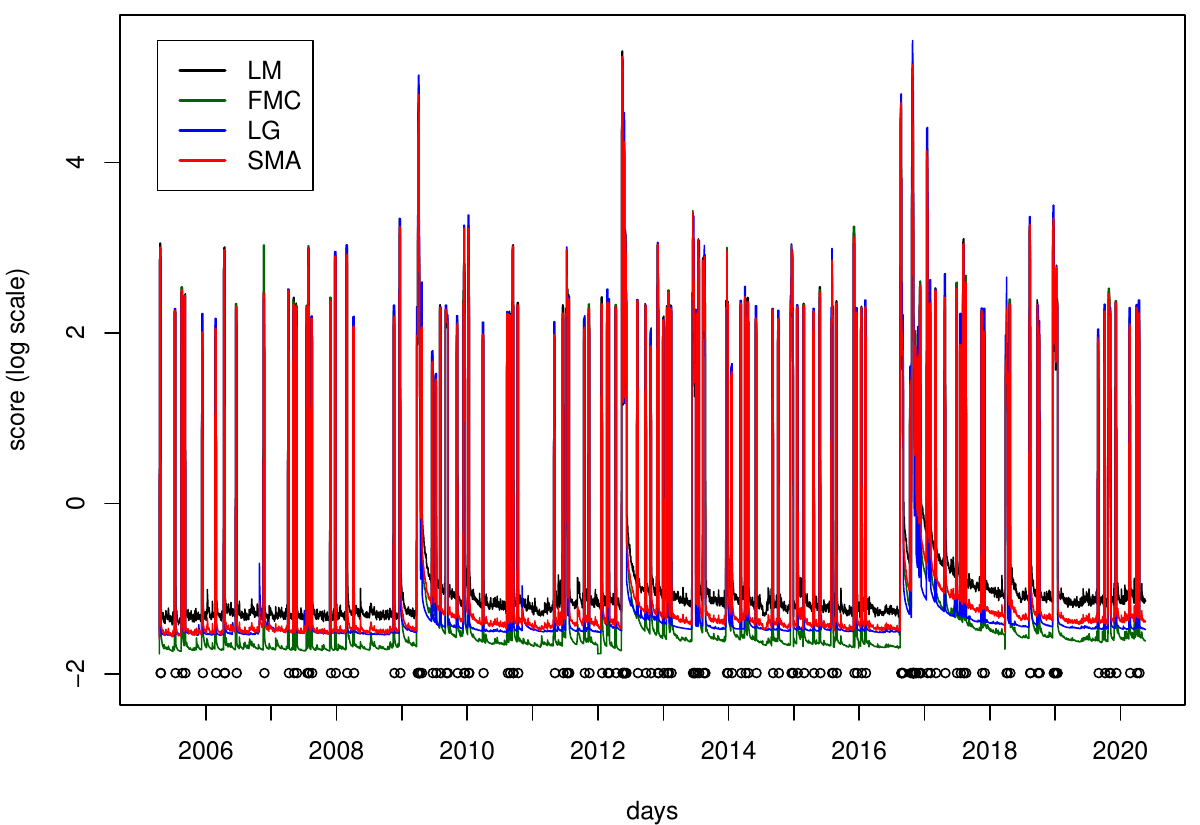}
\caption{Daily scores $s_{j,t}$ from \eqref{eq:realizedScores} based on $S_\mathrm{pois}$ for the four forecasting models from 2005 to 2020, logarithmic scale.  The circles indicate the days of M4+ earthquakes and the tickmarks on the horizontal axis mark the first day of each year.
\label{fig:plot_pois_time}}
\end{figure}

\subsection{Model comparison and results}  \label{subsec:model_comp}

Since the models we consider produce mean forecasts, we have to
employ (strictly) consistent scoring functions for the expectation
functional for a sound comparison, see also
Example~\ref{ex:SimpleProperty}.  Such functions are of the Bregman
form~\eqref{eq:Bregman} and a natural choice is the quadratic score
$S_\mathrm{quad} (x,y) = (x-y)^2$.  However, the quadratic score
focuses on no particular forecast cases in the sense of elementary
scores \citep{Ehmetal2016}.  As an alternative that puts more
emphasis on small forecast values and connects to the CSEP methods
(see Section~\ref{subsec:RELM}) we use the \textit{Poisson} scoring
function $S_\mathrm{pois} : (0, \infty) \times \nat_0 \to \real$
defined via
\begin{align}   \label{eq:PoisScore}
S_\mathrm{pois}(x, y) = -y \log (x) + x .
\end{align}
It is strictly consistent since it is a Bregman function corresponding
to the strictly convex function $f(x) = x (\log (x) - 1)$.  Note
that~\eqref{eq:PoisScore} can be interpreted as a discrete analogue to
the Dawid-Sebastiani-score \citep{DawidSebas1999}, but with the normal
distribution replaced by the Poisson distribution \citep{Brehmer2021}.
To obtain a daily score of the forecast models, the individual scores
for the 8993 grid cells are summed up.  The daily scores and the mean
score of model $j$ are thus given by
\begin{align}  \label{eq:realizedScores}
  s_{j,t} := \sum_{i=1}^{8993} S \big( x_{i,t}^{(j)} , \varphi_t (B_i) \big)
  \qquad \text{ and } \qquad \bar s_j := \frac{1}{5514} \sum_{t=1}^{5514} s_{j,t},
\end{align}
respectively, where $\varphi_t (B_i)$ is the observed number of events
in cell $B_i$ over the period from day $t$ to $t+6$. The mean score
$\bar s_j$ estimates the expected score of model $j$ and is thus a
measure of the relative forecast performance of this model.
Figure~\ref{fig:plot_pois_time} depicts the daily
scores~\eqref{eq:realizedScores} based on $S_\mathrm{pois}$ for the
four different models.  It uses a logarithmic scale, because the
values are much larger on days when events occur, in comparison to
days without events.  The FMC model consistently achieves the lowest
scores on days without earthquakes, since it consistently forecasts
the lowest number of events.  However, overall the LM model shows the
best performance in terms of mean scores over the whole testing period
\eqref{eq:realizedScores}, as can be seen in
Table~\ref{tab:performance_sum}.  This conclusion applies under both the
Poisson and the quadratic score.

\begin{table}[htb]
\caption{\label{tab:performance_sum}Summarized performance of the four
  models according to the mean score over the testing period $\bar s_j$
  from \eqref{eq:realizedScores}. The scoring functions used for
  evaluation are the Poisson (``pois'') and the quadratic (``quad'')
  score. Lowest values in each column are in boldface.}
\centering
\begin{tabular}{l cc}
\hline
Model   &  pois     & quad \\
\hline
LM  & \bf 2.68  & \bf 0.8218  \\
FMC &     2.76 &      0.8269  \\
LG  &     2.98 &      0.8275  \\
SMA &     2.70 &      0.8248 \\
\hline
\end{tabular}
\end{table}

To understand why the overall scores indicate superior predictive
ability of the LM model, we compute the mean score difference between
model $j$ and model $j'$ for each grid cell $i$ via
\begin{align}  \label{eq:score_diff}
  \Delta_i^{(j,j')} := \frac{1}{5514} \sum_{t=1}^{5514}
  \big( S_\mathrm{pois}(x_{i,t}^{(j)}, \varphi_t(B_i)) - S_\mathrm{pois}(x_{i,t}^{(j')}, \varphi_t(B_i)) \big) .
\end{align}
The left part of Figure~\ref{fig:spatial_comp} plots
$\Delta_i^{(1,2)}$, i.e.\ the mean score differences between the LM
and the FMC model per grid cell.  It illustrates that the lower
mean score of the LM model stems from its good performance in central
Italy in comparison to the FMC model.  The right part illustrates
aggregated performance, i.e.\ each pixel shows the performance when
the forecasts and observed values within a square neighbourhood
centred at this pixel are added up.  In this case the neighbourhood
has an edge length of 11 pixels.  Again, better predictive ability of
the LM model is most pronounced in central Italy and to a lesser
extent in the north, i.e.\ in areas where earthquake sequences
occurred during the study period.  The opposite is true for marine
regions around Sicily. 

\begin{figure}[bht]
\centering
\begin{minipage}{.49\textwidth}
	\includegraphics[clip, width = \textwidth, trim = 8 0 10 0]{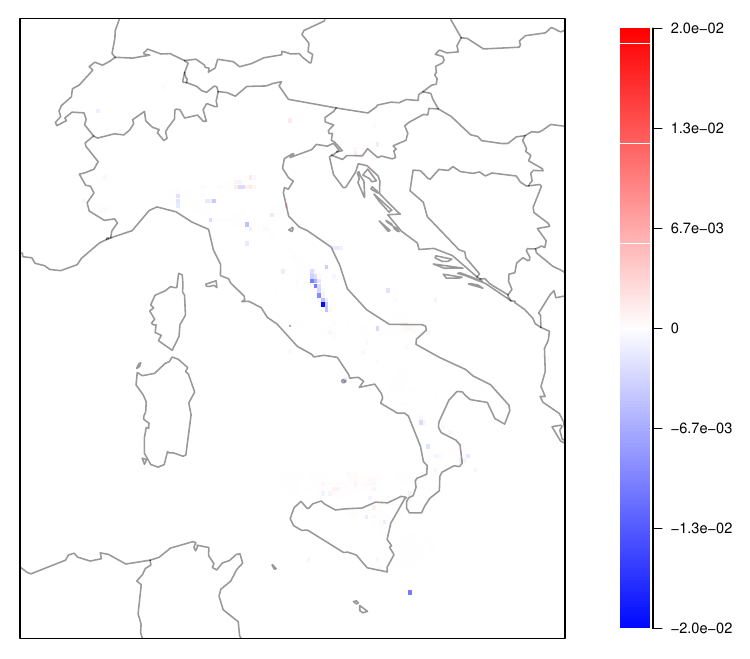}
\end{minipage}
\hfill
\begin{minipage}{.49\textwidth}
	\includegraphics[clip, width = \textwidth, trim = 8 0 10 0]{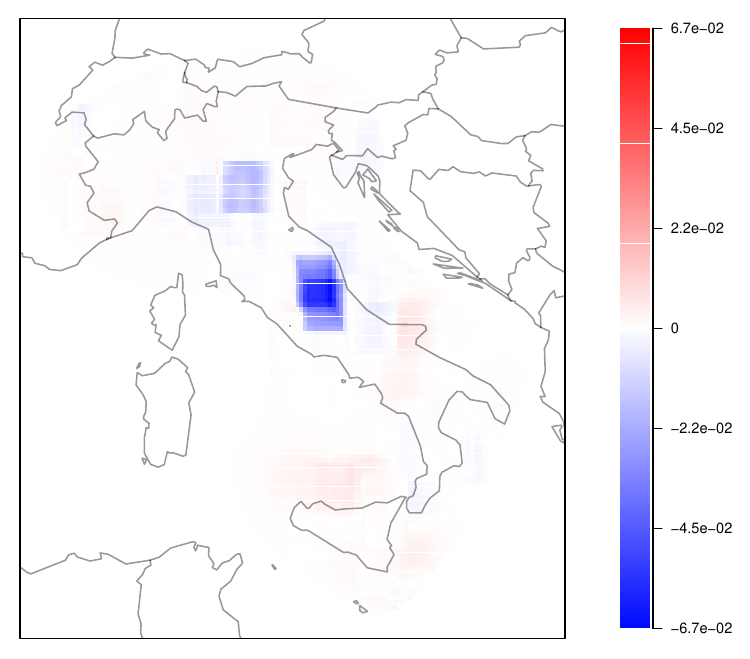}
	\centering
\end{minipage}
\caption{Mean score difference based on $S_\mathrm{pois}$ \eqref{eq:score_diff}
  between the LM and the FMC model, without (left) and with
  (right) aggregation.  Negative values (blue) indicate that the
  LM model has superior forecast performance, and positive
  values (red) vice versa.
  \label{fig:spatial_comp}
}
\end{figure}

Often, lack of data complicates the forecasting of point processes as
well as the proper testing of proposed forecasting models.  This
circumstance raises the question of how much data is needed
to reach valid conclusions on superior predictive ability.  As noted above, a
commonly used tool is the Diebold--Mariano test \citep{DiebMari1995},
which is a one-sample $t$-test applied to the score differentials, with adaptations to time series settings.
Standard power calculations for $t$-tests apply to independent samples,
where rules of thumb for the calculation of a required sample size or a
detectable difference are available \citep{Lehr1992, vanBelle2008}.  In
time series settings, rules of this type also require adaptation,
as exemplified in Section~S4 of the Supplementary Material, which
contains details on the analyses in this section.

\subsection{A new perspective on earthquake likelihood model testing}  \label{subsec:RELM}

An important element of the CSEP forecast experiments is a model
evaluation approach introduced by \citet{KaganJack1995} and
\citet{Schoretal2007}, to which we refer as earthquake likelihood
model testing (ELMT).  Further conceptual and computational
improvements are due to \citet{Zecharetal2010a},
\citet{Rhoadesetal2011}, and \citet{Ogataetal2013}.

Put simply, ELMT represents earthquakes by points in some region
$\cX \subset \real^k$, which is partitioned into grid cells
$B_1, \ldots, B_N$ for some $N \in \natural$, see e.g.\
Figure~\ref{fig:testing_region}.  The data consist of values
$x_1, \ldots, x_N \in \natural_0$ which count the earthquakes falling
in each cell.  A forecast or ``model'' is given by values
$\lambda_1, \ldots, \lambda_N \in (0, \infty)$ and its
``log-likelihood'' \citep{Schoretal2007} is defined as a sum of
Poisson log-likelihoods, i.e.\ via
\begin{align} \label{eq:binloglikelihood}
\ell (\lambda_1, \ldots, \lambda_N, x_1, \ldots, x_N) 
= \sum_{i=1}^{N}  \left( x_i \log \lambda_i - \log (x_i!) - 
\lambda_i \right) .
\end{align}
This terminology is motivated by the fact that, for a Poisson point
process with intensity measure $\Lambda$ such that
$\Lambda (B_i) = \lambda_i$, for $i = 1, \ldots, N$,
\eqref{eq:binloglikelihood} is the log-likelihood of the observation
$x_1, \ldots, x_N$.  Based on~\eqref{eq:binloglikelihood},
\citet{Schoretal2007} propose different tests.  Here we only consider
the test designed to compare forecasts.

The \textit{R-test}, or ratio test, compares two forecasts $A$ and $B$
specified by their grid cell values $\lambda_i^A$ and $\lambda_i^B$
for $i = 1, \ldots, N$, and aims to check whether model $A$ is at
least as good as model $B$.  The R-test considers the ``log-likelihood
ratio'' based on~\eqref{eq:binloglikelihood}, i.e.\
\begin{align} \label{eq:binloglikelihoodRatio}
R( A, B, x_1, \ldots, x_N) = \ell (\lambda_1^A, \ldots, \lambda_N^A,
x_1, \ldots, x_N) - \ell (\lambda_1^B, \ldots, \lambda_N^B, x_1, 
\ldots, x_N) ,
\end{align}
and then compares the realized value $z:= R( A, B , x_1, \ldots, x_N)$
to the distribution of the random variable
$Z := R( A, B , X_1, \ldots, X_N)$, where $X_1, \ldots, X_N$ are
independent Poisson random variables with parameters $\lambda_i^A$ for
$i=1, \ldots, N$. If $z$ lies in the lower tail of the distribution of
$Z$, then model $A$ is deemed worse than model $B$.  As the
distributional assumptions on $X_1, \ldots, X_N$ demonstrate, there is
an asymmetry inherent in the R-test: If model $A$ is tested against
model $B$, then the $X_i$ are assumed to have parameters $\lambda_i^A$
and if $B$ is tested against $A$, then $\lambda_i^B$ are assumed for
$X_i$.  As noted by \citet{Rhoadesetal2011} this implies that the
R-test is not really a comparative test, but rather a goodness-of-fit
test.  This explains seemingly contradictory results observed in
practice, where R-tests deem $A$ worse than $B$ and vice versa, see
also \citet{BraySchoen2013} for a discussion.  As a remedy,
\citet{Rhoadesetal2011} propose two modifications of the R-test, which
do not rely on a Poisson assumption to determine the distribution of
$Z$.

As pointed out by \citet{Harte2015}, ELMT suffers from several
drawbacks.  First, relying on a partition leads to a loss of
information, since the behaviour of models inside cells does not affect
the evaluation.  Moreover, assuming independence across cells as well
as a Poisson distribution leads to a likelihood mis-specification
under general point process models.  This prohibits the testing of
model characteristics other than cell expectations, since by reporting
$(\lambda_i)_{i=1,\ldots, N}$, every forecast is treated like a
Poisson point process.  However, as mentioned by
\citet{BraySchoen2013}, it is unclear how big the impact of the
Poisson assumption is on the testing results.

Taking the perspective of consistent scoring functions, we can answer
this question and clarify the role of the testing assumptions.  To
formalize ELMT in our setting, assume that the bounded domain $\cX$ is
partitioned into $k_n$ grid cells
$\cT_n = \{ B_1, \ldots, B_{k_n} \}$.  Based
on~\eqref{eq:binloglikelihood} and~\eqref{eq:binloglikelihoodRatio} we
define the \textit{cell scoring function}
$S_\mathrm{cell}^{\cT_n} : (0,\infty)^{k_n} \times \Mz \to \real$ via
\begin{equation}    \label{eq:score_bin}
S_\mathrm{cell}^{\cT_n} (\lambda_1, \ldots, \lambda_{k_n} , \varphi )
= \sum_{i=1}^{k_n} - \varphi (B_i) \log (\lambda_i) + \lambda_i
\end{equation}
for each partition $\cT_n$, $n \in \natural$. If $k_n = N$ and $x_i =
\varphi (B_i)$ for $i=1, \ldots, N$, then~\eqref{eq:binloglikelihoodRatio}
can be understood as the score difference between the forecasts
$\lambda_i^A$ and $\lambda_i^B$ with respect to $S_\mathrm{cell}^{\cT_n}$.
Since it applies the scoring function~\eqref{eq:PoisScore} to each grid 
cell, $S_\mathrm{cell}^{\cT_n}$ is strictly
consistent for the collection of cell expectations $\E \Phi (B_i)$,
$B_i \in \cT_n$, cf.~Example~\ref{ex:SimpleProperty}.  This shows that
the Poisson log-likelihood in~\eqref{eq:binloglikelihoodRatio} can be
used for a sound comparison of cell expectations, since the true
expectations obtain the minimal expected score.  We emphasize that
this conclusion holds regardless of whether or not the data or the
forecasts are based on Poisson point processes.  Moreover, dependence
among cells is irrelevant for this fact, since (strict) consistency
concerns only \textit{expected} scores.  Hence, the validity of
statistical methods which rely on the expected scores
of~\eqref{eq:score_bin} is not limited to Poisson models nor to
Poisson point process data.  In a nutshell, these methods assess
forecast performance in terms of cell expectations only, since the
scoring function~\eqref{eq:PoisScore} is strictly consistent for the
expectation.  For instance, the symmetric modifications of the R-test
due to \citet{Rhoadesetal2011} can be seen as Diebold--Mariano~(DM)
tests \citep{DiebMari1995} based on $S_\mathrm{cell}^{\cT_n}$.  Hence,
they test whether one model is better than its competitor in
forecasting the mean number of earthquakes in the cells.  Note that
although such methods are valid for arbitrary point processes,
considerable spatial or temporal dependencies will affect significance
levels and deteriorate their ability to detect differences in forecast
performance in finite samples.
 
It remains to discuss the role of the partitioning of $\cX$ into grid
cells.  To understand its implications, note that just as the Poisson
distribution leads to the scoring function~\eqref{eq:PoisScore} for
the expectation, the Poisson point process can be used to obtain a
scoring function for the intensity (Section~\ref{subsec:Intensity}).
The reason is that every intensity report induces a Poisson point
process with this intensity and these processes can then be compared
via the logarithmic score~\eqref{eq:PPlogScore}, which attains the
value~\eqref{eq:Poisson_logScore} for Poisson densities.  In the
setting of Section~\ref{subsec:Intensity}, we can formalize as
follows.

\begin{prop} \label{prop:PoisScoreII} Let every element of $\Mf$ admit
  a density $\lambda$ with respect to Lebesgue measure.  Then the
  scoring function $S: \Mf \times \Mz \to \real$ defined by
\begin{align}   \label{eq:intens_PoisScore}
  S( \Lambda, \{y_1, \ldots, y_n\} ) =
  - \sum_{i=1}^{n} \log \lambda(y_i) + \int_\cX \lambda(y) \dd y
\end{align}
for $n \in \natural$, and
$S(\Lambda, \emptyset) = \int_\cX \lambda (y) \dd y$, is a strictly
consistent scoring function for the intensity.
\end{prop}

\begin{proof}
  The scoring function~\eqref{eq:intens_PoisScore} corresponds to $S$
  from Proposition~\ref{prop:NormalizedIntens} when choosing the
  logarithmic score for $S'$, the Bregman
  function~\eqref{eq:PoisScore} for $b$ and $c=1$. Since $S'$ is
  strictly consistent and $b$ is strict, $S$ is strictly consistent
  for the intensity. 
\end{proof}

The scoring function~\eqref{eq:intens_PoisScore} can be interpreted as
a point process analogon to the Dawid-Sebastiani-score
\citep{DawidSebas1999}. While the Dawid-Sebastiani-score relies on the
first and second moments of the predictive distribution, this scoring
function depends on the intensity only.

The next result shows that the cell scoring function
$S_\mathrm{cell}^{\cT_n}$ serves as an approximation to the scoring
function~\eqref{eq:intens_PoisScore}.  Essentially, if a forecaster
does not report an intensity $\lambda$, but only the integrals
$\lambda_i^{(n)}$ of $\lambda$ over the collection of grid cells
${\cT_n}$, then forecast comparison using the cell scoring function
$S_\mathrm{cell}^{\cT_n}$ is on par with a comparison based on the
scoring function~\eqref{eq:intens_PoisScore}, provided the partition
is sufficiently fine.  The correction term in \eqref{eq:approx_conv}
does not affect the evaluation, as it is independent of the reported
integrals.  To make this precise, we follow \citet{DVJvol1} and call a
sequence of partitions $(\cT_n)_{n \in \nat}$ \textit{dissecting} if
it is nesting and asymptotically separates every pair of points.

\begin{prop} \label{prop:approx_spatial} Let
  $\lambda : \cX \to (0, \infty)$ be an intensity and
  $(\cT_n)_{n \in \nat}$ a dissecting system of measurable partitions
  of $\cX$ which generates the Borel $\sigma$-algebra on $\cX$. Let
  $P_0 \in \cP$ be the distribution of the unit rate Poisson point
  process on $\cX$ and define partition integrals
\begin{align*}
\lambda_i^{(n)} = \int_{B_i^{(n)}} \lambda(y) \dd y ,
\end{align*}
for all $i = 1, \ldots, k_n$, $B_i^{(n)} \in \cT_n$, and $n \in \nat$.
Then 
\begin{align}   \label{eq:approx_conv}
S_\mathrm{cell}^{\cT_n} \big( \lambda_1^{(n)}, \ldots, 
\lambda_{k_n}^{(n)}, \varphi \big) + \sum_{i=1}^{k_n} \one{ 
\varphi(B_i^{(n)}) > 0} \log ( \vert B_i^{(n)} \vert) 
\longrightarrow S(\Lambda, \varphi),
\end{align}
for $P_0$-a.e.\ $\varphi \in \Mz$ as $n \to \infty$, where $S$ is the
scoring function~\eqref{eq:intens_PoisScore}.
\end{prop}

\begin{proof}
  Let $\varphi = \{y_1, \ldots, y_m \}$ with $m \in \nat_0$ be a point
  process realization.  For a large $n \in \nat$ every set $B_i^{(n)}$
  contains at most one point of $\varphi$, and we let $i_n(j)$ denote
  the index of the set such that $ y_j \in B_{i_n(j)}^{(n)}$ for
  $j = 1, \ldots, m$.  Then the left-hand side
  of~\eqref{eq:approx_conv} equals
\begin{align*}
& \phantom{=} - \sum_{i=1}^{k_n} \left( \varphi (B_i^{(n)}) \log \Big( 
\int_{B_i^{(n)}} \lambda(y) \dd y \Big) - \one{ \varphi(B_i^{(n)})
> 0} \log ( \vert B_i^{(n)} \vert) - \int_{B_i^{(n)}} \lambda(y) 
\dd y \right) \\
& = - \sum_{j=1}^{m} \log \left( \vert B_{i_n(j)}^{(n)} \vert^{-1}
\int_{B_{i_n(j)}^{(n)}} \lambda(y) \dd y \right)  + \int_\cX \lambda(y) \dd y \\
& \longrightarrow - \sum_{j=1}^{m} \log(\lambda (y_j)) + \int_\cX
\lambda(y) \dd y
\end{align*}
for $n \to \infty$ and $P_0$-a.e.\ $\varphi \in \Mz$.  The last line
follows from an approximation result for the Radon-Nikod\'ym
derivative $\lambda$ \citep[Lemma~A1.6.III]{DVJvol1}. 
\end{proof}

Propositions~\ref{prop:PoisScoreII} and~\ref{prop:approx_spatial} show
that comparisons based on the Poisson
log-likeli\-hood~\eqref{eq:binloglikelihoodRatio} can be understood as
approximations to a comparison of intensity forecasts with the scoring
function~\eqref{eq:intens_PoisScore}.  In particular, we can conclude
that partitioning is not essential for model evaluation: A
straightforward generalization of ELMT relies on models that produce
intensities $\lambda : \cX \to (0, \infty)$ on the testing region,
which can then be compared via consistent scoring functions
(Section~\ref{subsec:Intensity}), with~\eqref{eq:intens_PoisScore}
giving one possible choice.  However, in some situations partitioning
might be desirable, e.g.\ when no explicit expression for the
intensity is available. This also applies to our case study, where
only the expected numbers per grid cell were produced by the
forecasting models.  In light of
Proposition~\ref{prop:approx_spatial}, our evaluation is essentially a
comparison of the point process intensities forecasted by the four
competing models.

\section{Discussion}  \label{sec:discussion}

Assessing forecast accuracy and comparing the performance of several
competing forecasts is a non-trivial task that poses challenges across
disciplines and sectors.  In this paper we have demonstrated that
consistent scoring functions allow for the comparative evaluation of
point process forecasts.  Our methods are complementary to the
simulation-based approach of \citet{Heinetal2019}, encompass existing
techniques for model comparison, and yield a novel understanding of
earthquake likelihood model testing.  In particular, we have shown
that the Poisson log-likelihood can be used for theoretically
principled comparative forecast evaluation in terms of cell
expectations.  This is an important finding, as it supports current
practice in comparisons between Poisson models, for which the
interpretation in terms of log-likelihood is useful and welcome, and
other types of models, which might generate cell expectations only.
When one ignores the possibility of multiple events in a cell, the
cell expectation equals the probability of an event, and we are in
the setting studied by \citet{Serafetal2022}.

To conclude our study, we continue the discussion of methods for model
comparison that are based on the log-likelihood, i.e.\ the model
log-density evaluated at the observations, distinguish relative and
absolute performance assessment, and hint at future work.

The \textit{entropy score} considers the log-likelihood of probability
forecasts induced by a point process model \citep{DaleyVe-Jo2004,
  HarteVe-Jo2005}.  It can be interpreted as an application of the
logarithmic score to probabilistic predictions in terms of numbers of
events.  The expected value of the entropy score difference between a
model of interest and a reference model yields the \textit{information
  gain}.  \citet{DaleyVe-Jo2004} note that the information gain is an
inherent characteristic of a point process model that quantifies
predictability and relates closely to entropy.  A detailed discussion
of the relationships between proper scoring rules, entropy, and
divergences is available in Section 2.2 of \citet{GneitRaft2007}.

Information criteria such as AIC or BIC assess the relative quality of
competing models, and can be applied to point process models, provided
that densities are available, see e.g.~\citet{Chenetal2018}.  They
connect naturally to consistent scoring functions through their
goodness-of-fit component, which usually consists of a log-likelihood
and thus finds the logarithmic score~\eqref{eq:Pseudo_and_Log} for the
model at hand.  The penalty component, which depends on the number of
fitted parameters, is a necessary correction when operating in-sample,
i.e.\ relying on the same data as used for model fitting.  In
contrast, comparative forecast evaluation via scoring functions is
tailored to out-of-sample settings, as in our case study.

In Bayesian settings, a standard approach to model comparison is the
use of \textit{Bayes factors} of a model vs.~a competitor, as employed
by \citet{Marzetal2012} in earthquake likelihood model testing.
Similar to information criteria, Bayes factors are closely connected
to the logarithmic score \citep[Section~7]{GneitRaft2007}.

A further likelihood-based method for point processes uses deviance
residuals, as proposed by \citet{Clementsetal2011}.  In general, point
process residuals form an empirical process arising from fitting a
conditional intensity to data \citep{Schoen2003, Baddeleyetal2005, 
  Brayetal2014}.  Residuals can be used to assess goodness-of-fit and
especially indicate in which regions a model fits well or poorly.
\citet{Clementsetal2011} propose a graphic comparison of models for
the conditional intensity by plotting the log-likelihood ratio across
a partition of the spatial domain, which can be interpreted as
visualizing local differences in the logarithmic score.

Consistent scoring functions, as well as the just discussed methods,
compare competing models or forecasts.  This contrasts with many
existing point process model evaluation tools, which focus on absolute
performance, e.g.\ based on calibration \citep{Thorarinsdottir2013}
and goodness-of-fit.  Although this is important in model building, a
selection among the available competitors has to be done eventually,
and measures of absolute performance are not designed, and hence tend
to be poorly positioned, for this task.  Moreover, as pointed out by
\citet{NoldeZieg2017}, focusing on absolute performance may lead to
misguided incentives in designing candidate models.

Earthquake likelihood model testing, a central element of the CSEP
forecasting experiments, is tacitly based on strictly consistent
scoring functions for expectations.  A principled use of these
functions, as illustrated in our case study, provides valid
comparisons of forecasted intensities.  Importantly, common
assumptions in the context of CSEP tests are not needed for such an
evaluation: Neither the forecasting models, nor the data, need to
follow any Poisson or independence assumption, and with suitably
adapted models, partitioning the testing region can be avoided.  As
these conclusions apply to intensity forecasts, a natural next step is
to employ consistent scoring functions to compare earthquake forecasts
in terms of other statistical properties.  In particular, dependence
properties or full distributions are natural candidates for forecast
evaluation in the CSEP framework \citep{Schoretal2018,
  Nandanetal2019}.  The choice and implementation of consistent
scoring functions in settings of this type pose challenges for future
work.  

{\footnotesize
\paragraph{Acknowledgements}
  Jonas Brehmer and Tilmann Gneiting are grateful for support by the
  Klaus Tschira Foundation. Jonas Brehmer gratefully acknowledges support
  by the German Research Foundation (DFG) through Research
  Training Group RTG 1953.  Part of this research came to fruition
  during mutual visits of Kirstin Strokorb at the University of
  Mannheim and Jonas Brehmer and Martin Schlather at Cardiff
  University during a workshop funded by the London Mathematical
  Society.  We thank our hosting institutions for their generous
  hospitality.  The authors would also like to thank Claudio
  Heinrich-Mertsching, Christopher D\"orr and Alexander Jordan for
  helpful discussions, and Kristof Kraus for code review.  
  Likewise, we are grateful to the anonymous reviewers for their
  comments that helped improve the clarity of this paper.}

\section*{Supplementary Material}

The Supplementary Material contains additional technical details and 
further simulation experiments.  \textsf{R} code for reproduction is publicly available \citep{Brehmer2023}.  Data are available from the authors upon request.

{\small
\bibsep=0pt

}

\clearpage

\setcounter{page}{1}
\setcounter{table}{0}
\setcounter{section}{0}

\setcounter{equation}{0}

\setcounter{example}{0}
\setcounter{prop}{0}
\setcounter{cor}{0}
\setcounter{lemma}{0}
\setcounter{theorem}{0}

\renewcommand{\thepage}{\roman{page}}

\renewcommand\thesection{S\arabic{section}}
\renewcommand{\theHsection}{S\arabic{section}}
\renewcommand{\theequation}{S\arabic{equation}}
\renewcommand\thetable{S\arabic{table}}
\renewcommand\thefigure{S\arabic{figure}}

\renewcommand\theexample{S\arabic{example}}
\renewcommand\thelemma{S\arabic{lemma}}
\renewcommand\thetheorem{S\arabic{theorem}}
\renewcommand\thecor{S\arabic{cor}}
\renewcommand\theprop{S\arabic{prop}}

\newcommand{\newindex}{\Large Contents}
\newlistof{ind}{tce}{\newindex}
\newcommand\newsection[1]{%
  \phantomsection
  \addcontentsline{tce}{section}{\protect\makebox[1.3em][l]{\thesection}#1}}
\newcommand\newsubsection[1]{%
  \phantomsection
  \addcontentsline{tce}{subsection}{\protect\makebox[2.1em][l]{\thesubsection}#1}}

\begin{center}
\bf \Large 
Supplement to: \\
Comparative evaluation of point process forecasts
\end{center}
\bigskip

\listofind

\section{Discussion of point process scenarios}
\newsection{Discussion of point process scenarios}
\label{sec:scenarios}

This section extends the discussion at the end of Section~2.

In the main manuscript we focus on the setting where a spatial point
process $\Phi$ on some domain $\cX \subset \real^d$ is observed at
fixed points in time.  For example, the case study (Section~5)
considers daily observations of locations of earthquakes in Italy.
However, forecasting for point processes appears in a variety of other
situations, and the use of strictly consistent scoring functions
adapts readily.  To clarify this idea, we distinguish three different
point process scenarios.  Although motivated by commonly encountered
applications, there might be settings where the distinction is
artificial.

\begin{description}
\item{\bf Scenario A} (purely spatial) In this scenario, the process
  is defined on either a single spatial domain (Scenario A1), or
  several non-overlapping subdomains (Scenario A2).  Examples include
  points fixated by observers of images \citep{Bartetal2013} and
  locations of trees in a forest \citep{StoyPent2000}.  Stationarity
  is a common simplifying assumption in this context.

\item{\bf Scenario B} (purely temporal) In this scenario, there is no
  spatial component and the process concerns points in time only.
  Examples are arrival times of e-mails \citep{Foxetal2016} and times
  of infection with a disease \citep{Schoenbetal2019}. In this special
  setting the directional character of time allows for a distinct
  interpretation and treatment.

\item{\bf Scenario C} (spatio-temporal) In addition to the spatial
  component, processes in this scenario possess a temporal component,
  which could be discrete (Scenario C1) or continuous (Scenario C2).
  Examples include locations and times of crimes in a city
  \citep{Mohleretal2011} and earthquakes observed over time in a
  specific region \citep{Ogata1998,Zhuangetal2002}.  The main
  manuscript focuses on Scenario~C1.
\end{description}

In order to compare forecasts in each of these scenarios, we can in
principle proceed as in Sections~4 and 5: Choose a strictly consistent
scoring function $S$ for a statistical property of point processes,
e.g.\ the intensity, and find the mean score difference
\begin{align*}
\frac 1n \sum_{i=1}^n \left( S(r_i, \varphi_i) - S(r_i^*, \varphi_i) \right)
\end{align*}
for forecast reports $r_i$ and $r_i^*$ and associated observed point
patterns $\varphi_i$, where the index $i = 1, \ldots, n$ represents
repeated observations.  Then negative values support forecast $r$,
while positive values support $r^*$.  The mean score difference is an
estimator of the expected score difference
$\E \left( S(r, \Phi) - S(r^*, \Phi) \right)$, and implementation
details vary across scenarios, also impacting the assessment of the
uncertainty inherent in the estimate, which is of particular
importance when tests for superior predictive performance are sought.
To illustrate the key ideas we distinguish whether the point process
has a continuous or discrete time component.

\paragraph{Discrete time}
 
Assume that the point process is sampled at fixed points in time,
i.e.\ it can be modelled by a sequence $(\Phi_t)_{t \in \natural}$
adapted to a filtration $(\cH_t)_{t \in \natural}$.  This setting
includes the special case of i.i.d.\ realizations and relates to
Scenario~C1 as well as variants of Scenario~A with repeated
observations.  Given two forecast sequences $(R_t)_{t \in \natural}$
and $(R_t^*)_{t \in \natural}$ the score differences
$(S (R_t, \Phi_t) - S(R_t^*, \Phi_t))_{t \in \natural}$ form a
sequence of real-valued random variables, thus the common
Diebold--Mariano (DM) tests \citep{DiebMari1995} are directly
applicable.  We briefly discuss the more general forecast comparison
framework of \citet{NoldeZieg2017} in our setting.  Let $S$ be
strictly consistent for a point process statistic
$\Gamma : \cP \to \sA$ and assume that forecasts in terms of $\Gamma$
applied to the conditional distribution $\Phi_t \mid \cH_{t-1}$ are
given.  These forecasts can be regarded as random sequences
$R = (R_t)_{t \in \natural}$ and $R^* = (R_t^*)_{t \in \natural}$ such
that $R_t$ and $R^*_t$ are $\cH_{t-1}$-measurable.  Their forecast
performance can be compared via the \textit{mean score difference}
\begin{equation}  \label{eq:averageScores}
\Delta_n (R, R^*) := \frac{1}{n} \sum_{t=1}^{n} S( R_t, \Phi_t) - 
\frac{1}{n} \sum_{t=1}^{n} S(R_t^* , \Phi_t) 
= \frac 1n \sum_{t=1}^{n} \left( S( R_t , \Phi_t) - 
S(R_t^*, \Phi_t) \right) ,
\end{equation}
which is an estimator for the difference in expected scores.  Based on
the law of large numbers and the strict consistency of $S$, a positive
value supports the hypothesis that $R^*$ is superior to $R$, while a
negative value supports the opposite hypothesis.  A further step is to
test whether $\Delta_n (R, R^*)$ is significantly different from zero.
In the simple situation of an i.i.d.\ sequence
$(\Phi_t)_{t \in \nat}$, the forecast sequences reduce to
$r, r^* \in \sA$, i.e.\ they are constant in time.  We can then test
for significant differences in expected scores based on the asymptotic
normality of the well-known $t$-statistic
$t_n := \sqrt{n} \Delta_n (r, r^*) / \sqrt{\hat \sigma_n^2} $, where
$\hat \sigma_n^2$ estimates the variance of
$ S(r,\Phi) - S(r^*, \Phi)$.  For dependent time series
$(\Phi_t)_{t \in \natural}$, $(R_t)_{t \in \natural}$, and
$ (R_t^*)_{t \in \natural}$ we refer to \citet{NoldeZieg2017}, where
tests for equal forecast performance rely on suitable asymptotic
results developed in \citet{GiacWhite2006}.

\paragraph{Continuous time}

If we consider point processes in Scenario~C2 or Scenario~B, then
temporal dependence between the points of $\Phi$ becomes an essential
feature of the process and can also be object of the forecast.  For
instance, the statistic $\Gamma$ might consist of temporal features of
the point process.  Also, dependencies need to be accounted for in
estimation and testing, as they affect asymptotic distributions.  To
illustrate this, assume for simplicity that $\Phi$ is a purely
temporal process observed over a time period $[0,T]$ with
$0 <t_1 < \cdots < t_k <T$ denoting the corresponding arrival times.
Moreover, let $R_i$ and $R_i^*$ be reports issued at time $t_{i-1}$
based on the previous arrivals $t_1, \ldots, t_{i-1}$.  This yields a
realized score difference
\begin{align}   \label{eq:averageScoresTime}
\Delta_T (R, R^*) &= \sum_{i=1}^{n(T)} \left( S ( R_i , t_i ) 
- S ( R_i^* , t_i ) \right),
\end{align}
where $n(T) := \Phi ((0,T])$ is the random number of points in
$[0,T]$.  In contrast to~\eqref{eq:averageScores} we do not consider
averages since $n(T)$ is a random variable depending on $\Phi$ and
dividing by it will interfere with the consistency of $S$.  The score
difference $\Delta_T (R, R^*)$ is a sum of a random number of random
variables, usually called a random sum.  This perspective connects the
estimation of score differences to the theory of total claim amount in
insurance, see e.g.\ \citet{Mikosch2009} and \citet{EKM1997}.

Asymptotic results for the score
difference~\eqref{eq:averageScoresTime} for $T \to \infty$ are
desirable to assess how uncertainty affects forecast evaluation and
transfer the DM test to the continuous time setting.  One possible
approach to this problem relies on limit theorems for randomly indexed
processes due to \citet{Anscombe1952}, in particular random central
limit theorems: If the number of points $n(T)$ satisfies a weak law of
large numbers, then under Anscombe's condition, we only need to ensure
that the sequence
$(S ( R_i , t_i ) - S ( R_i^* , t_i ) )_{i \in \natural}$ satisfies a
central limit theorem in order to obtain asymptotic normality
for~\eqref{eq:averageScoresTime}.  Such results are available for
strong mixing \citep{Lee1997}, $\psi$-weakly dependent
\citep{HwangShin2012}, and $m$-dependent \citep{Shang2012} sequences.
Working these into tests for superior forecast performance for
(spatio-)temporal point processes is an avenue for future work.

\section{Further scoring functions for point processes}
\newsection{Further scoring functions for point processes}

The technical context of this section is the same as in Section~3.

\subsection{Simple examples}
\newsubsection{Simple examples}

The subsequent examples are applications of the transformation
principle (Proposition~1).

\begin{example}[void probability]
  For any fixed set $B \in \cB (\cX)$ the functional $\Gamma$ defined
  via
  $\Gamma (P) = P ( \{ \varphi \mid \varphi \cap B = \emptyset \} )$
  is elicitable.  This follows from Proposition~1 with
  $T(F) = \E_F Y$ and $g(\varphi) = \one{ \varphi (B) = 0}$.  Strictly
  consistent scoring functions for $\Gamma$ are of the Bregman
  form~(2), see also Example~1.
\end{example}

\begin{example}[point process integrals]  
\label{ex:RevelationPPcorI}
Fix measurable functions $f_i : \cX \to \real$, $i = 1, \ldots, m$ for
$m \in \natural$. Define $g: \Mz \to \real^m$ via
\begin{equation*}
g(\varphi) = \left( \int_\cX f_1 \dd \varphi, \ldots, \int_\cX f_m \dd \varphi
\right)^\top = \left( \sum_{x_i \in \varphi} f_1 (x_i) , \ldots, 
\sum_{x_i \in \varphi} f_m (x_i) \right)^\top  ,
\end{equation*}
set $g(\cP):= \lbrace P \circ g^{-1} \mid P \in \cP \rbrace$ and let
$T = \mathrm{id}_{g(\cP)}$.  Then the finite-dimensional distribution
functional $\Gamma_{f_1, \ldots, f_m} (P) = T(P \circ g^{-1})$ is an
elicitable property of the point process $\Phi$.  Consistent scoring
functions for $\Gamma$ are obtained by applying consistent scoring
functions for distributions \citep{GneitRaft2007} to the $m$-variate
distribution $P \circ g^{-1}$, see also \citet{Heinetal2019}.
\end{example}

\subsection{Distribution and density}
\newsubsection{Distribution and density}

This material extends Section~3.2.

\paragraph{General result for the full distribution}

The law $P_\Phi$ of a finite point process on $\cX$ can be
equivalently represented by two sequences $(p_k)_{k \in \natural_0}$
and $(\Pi_k)_{k \in \natural}$.  Each $p_k$ specifies the probability
of finding $k$ points in a realization.  The $\Pi_k$ are symmetric
probability measures on $\cX^k$ which describe the distribution of any
ordering of points, given $k$ points are realized, see
\citet[Chapter~5.3]{DVJvol1} for details.

To state the next result, we introduce the notion of
\textit{symmetric} scoring functions, where
$S : \sA \times \real^n \to \real$ is called symmetric if
$S(a, y_1, \ldots, y_n) = S(a, y_{\pi (1)}, \ldots, y_{\pi(n)})$ for
all $a \in \sA$, $y \in \real^n$ and permutations $\pi$. Symmetry
ensures that the scoring functions in the subsequent proposition are
independent of the enumeration of the realization of $\Phi$.

\begin{prop} \label{prop:PPdistribution} Let $\cP$ be a class of
  distributions of finite point processes, with $Q \in \cP$ decomposed
  into $(\Pi_k^Q)_{k \in \natural}$ and $(p_k^Q)_{k \in
    \natural_0}$. Set $\cF_k := \{\Pi_k^Q \mid Q \in \cP \}$ and let
  $S_k : \cF_k \times \cX^k \rightarrow \real$ be a symmetric
  consistent scoring function for $\mathrm{id}_{\cF_k}$ for all
  $k \in \natural$.  Let $S_0$ be a consistent scoring function for
  distributions on $\natural_0$.  Then the function
  $S: \cP \times \Mz \to \real$ defined via
\begin{align*}
S( ((\Pi_k^Q)_{k \in \natural} , (p_k^Q)_{k \in \natural_0}), 
\{y_1, \ldots, y_n\} ) = S_n (\Pi_n^Q, y_1, \ldots , y_n) + 
S_0 ( (p_k^Q)_{k \in \natural_0} , n)
\end{align*}
for $n \in \natural$ and $S( ((\Pi_k^Q)_{k \in \natural} , 
(p_k^Q)_{k \in \natural_0}), \emptyset ) := S_0 ( (p_k^Q)_{k \in
\natural_0} , 0)$ is a consistent scoring function for the
distribution of the point process $\Phi$.  It is strictly consistent
if $S_0$ and $(S_k)_{k \in \natural}$ are strictly consistent.
\end{prop}

\begin{proof}
The result follows by decomposing the expectation $\E_P S(Q, \Phi)$
into expectations on the sets $\{\Phi = n \}$ for $n \in \natural$ and
using the (strict) consistency of $S_n$ on each set.
\end{proof}

\paragraph{Hyv\"arinen score}

Assume that a point process model admits explicit expressions for the
Janossy densities $(j_k)_{k \in \natural_0}$ (see Section~3.2),
however, only up to an unknown normalizing constant.  In this
situation, 0-homogeneous consistent scoring functions for densities
can be of use, as they allow for the consistent evaluation of an
unnormalized density. The most relevant example is the
\textit{Hyv\"arinen score} defined via
\begin{align*}
\mathrm{HyvS}(f,y) := \Delta \log f(y) + \frac 12 \Vert \nabla \log f(y) \Vert^2 ,
\end{align*}
where $\nabla$ denotes the gradient, $\Delta$ is the Laplace operator,
and $f$ is a twice differentiable density on $\real^d$. To ensure
strict consistency on a class of probability densities $\cL$ its
members have to be positive almost everywhere and for all $f , g \in
\cL$ it must hold that $\nabla \log( f(y)) g(y) \to 0$ as $\Vert y 
\Vert \to \infty$, see \citet{Hyvae2005}, \citet{Parryetal2012}, and
\citet{EhmGneit2012} for details.

Similar to the logarithmic score, we can transfer the Hyv\"arinen
score to the point process setting.  To do this we assume that for all
$Q \in \cP$ and $k \in \natural$, $j_k^Q$ is defined on $(\real^d)^k$
and satisfies the aforementioned regularity conditions.  Then the
function $S: \cP \times \Mz \to \real$ defined via
\begin{align}   \label{eq:PPHyvScore}
S( (j_k^Q)_{k \in \natural_0}, \{y_1, \ldots, y_n\} ) 
=  \mathrm{HyvS} (j_n^Q , y_1, \ldots , y_n )
\end{align}
for $n \in \natural$ and $S( (j_k^Q)_{k \in \natural_0}, \emptyset ) 
:= 0$ is a consistent scoring function for the distribution of the
point process $\Phi$.  Observe that we cannot achieve strict
consistency for $S$, since the probability of $\vert \Phi \vert = n$
is proportional to $j_n$ and thus not accessible to the Hyv\"arinen
score.

\begin{example}[Gibbs point process]   \label{ex:GibbsLikelihood}
Stemming from theoretical physics,  Gibbs processes are a popular tool
to model particle interactions.  They are defined via their Janossy
densities
\begin{equation*}
j_n (y_1 , \ldots, y_n) = C(\theta) \exp \left( - \theta U(y_1, \ldots, y_n)  \right) ,
\end{equation*}
where $U$ represents point interactions, $\theta$ is a parameter often
referred to as temperature, and $C$ is the partition function, which
ensures that the collection $(j_k)_{k \in \natural_0}$ is properly
normalized, see e.g.\ \citet[Chapter~5.3]{DVJvol1} and
\citet[Chapter~5.5]{Chiuetal2013}.  It is in general difficult to find
closed form expressions for $C$, or even to approximate it, hence the
Hyv\"arinen score might seem attractive to evaluate models based on
$(j_k)_{k \in \natural_0}$.  Plugging $j_n$ into~\eqref{eq:PPHyvScore}
gives
\begin{align*}
S((j_k)_{k \in \natural_0} , \{y_1, \ldots, y_n \} ) = \theta 
\left( - \Delta U(y_1, \ldots, y_n) + \frac{\theta}{2} \Vert \nabla 
U(y_1, \ldots, y_n) \Vert^2 \right)
\end{align*}
for $n \in \natural$, where the derivatives are computed with respect
to the coordinates of the vector $(y_1, \ldots, y_n) \in (\real^d)^n$.
The simplest choice for interactions is to restrict $U$ to first- and
second-order terms
\begin{align*}
U (y_1, \ldots, y_n) := \sum_{i=1}^{n} l(y_i) + 
\sum_{i,j = 1}^{n} \psi \left( \Vert y_i - y_j \Vert^2 \right)
\end{align*}
for $l : \real^d \to \real$ and $\psi : [0, \infty) \to [0, \infty)$
with $\psi (0) = 0$, see e.g.\ \citet[Chapter~5.3]{DVJvol1}.  To apply
the Hyv\"arinen score in this setting, $l$ and $\psi$ have to satisfy
regularity conditions detailed above and in \citet{Hyvae2005}, and in
particular admit second order derivatives almost everywhere.  The
soft-core models for $\psi$ introduced in \citet{OgatTane1984} satisfy
this condition, while their hard-core model for $\psi$ is not even
continuous.  An additional technical issue is that
\citet{OgatTane1984} consider point processes on a finite domain $\cX$
and use a constant $l$.  To make the Hyv\"arinen score applicable in
this setting a possible solution is to approximate their models via
twice differentiable densities on $(\real^d)^n$.
\end{example}

\subsection{Moment measures}
\newsubsection{Moment measures}
\label{subsec:moment_measure}

Moment measures can be interpreted as the point process analogue to
the moments of a univariate random variable.  Strictly consistent
scoring functions for these measures can be constructed in the same
way as for the intensity, see Proposition~3.4.

For $n \in \natural$, let $\Mf^n = \Mf( \cX^n)$ be the set of finite
Borel measures on $\cX^n$.  For positive measurable functions $f: \cX^n
\to (0, \infty)$ the \textit{$n$-th moment measure} $\mu^{(n)}$ and
the \textit{$n$-th factorial moment measure} $\alpha^{(n)}$ are
defined via the relations
\begin{align*}
\mathbb{E} \left( \underset{x_1, \ldots, x_n \in \Phi}{\sum}
f(x_1, \ldots, x_n) \right) &= \int_{\cX^n} f(x_1, \ldots, x_n) \dd
                              \mu^{(n)} (x_1, \ldots, x_n) ,
\end{align*}                              
and
\begin{align*}
\mathbb{E} \left( \underset{x_1, \ldots, x_n 
\in \Phi}{\sum\nolimits^{\neq}} f(x_1, \ldots, x_n) \right)
&= \int_{\cX^n} f(x_1, \ldots, x_n) \dd \alpha^{(n)} 
(x_1, \ldots, x_n) ,
\end{align*}
respectively, see e.g.\ \citet{Chiuetal2013} and  \citet{DVJvol1}.
Here $\Sigma^{\neq}$ denotes summation over all $n$-tuples that
contain distinct points of $\Phi$.  Using the notion of
\textit{factorial product} defined via
\begin{align*}
m^{[n]} := \left\lbrace 
\begin{array}{lr}
m (m-1) (m-2) \cdots (m-n+1) &, \, m \geq n \\
0 &, \, m <n
\end{array}
\right.
\end{align*}
for $m,n \in \natural$ we obtain the concise representations
$\mu^{(n)} (B^n) = \E \Phi (B)^n$ and $\alpha^{(n)} (B^n) = \E \Phi
(B)^{[n]}$ for Borel sets $B \in \mathcal{B} (\cX)$, see e.g.\
\citet[Chapter~5]{DVJvol1}.

\begin{prop} \label{prop:MomentM} Set
  $\cF^n := \{ P^* \mid P \in \Mf^n \}$, let
  $S: \cF^n \times \cX^n \rightarrow \mathbb{R}$ be a consistent
  scoring function for $\mathrm{id}_{\cF^n}$ and
  $b : [0, \infty) \times [0, \infty) \rightarrow \real$ a Bregman
  function.
\begin{enumerate}[label=(\roman*)]
	\item The function $S_1: \Mf^n \times \Mz \to \real$ defined via
	\begin{align*}
	S_1 (\mu, \{y_1, \ldots, y_m \} ) =  \underset{x_1, \ldots, x_n 
	\in  \{y_1, \ldots, y_m \}}{\sum} S(\mu^* , x_1, \ldots, x_n) 
	+ c \hsp b(\mu(\cX^n), m^n )
	\end{align*}
	for $m \in \natural$, and $S_1 (\mu, \emptyset) = c \hsp b(\mu
        (\cX^n), 0)$ for $c > 0$, is a consistent scoring function for
        the $n$-th moment measure.
	\item The function $S_2: \Mf^n \times \Mz \to \real$ defined via
	\begin{align*}
	S_2 (\alpha,  \{y_1, \ldots, y_m \}) = \underset{x_1, \ldots, x_n
	\in  \{y_1, \ldots, y_m \}}{\sum\nolimits^{\neq}} S(\alpha^* , 
	x_1, \ldots, x_n)  + c \hsp b ( \alpha(\cX^n), m^{[n]})
	\end{align*}
	for $m \geq n$ and $S_2(\alpha, \{ y_1, \ldots, y_m \}) = c \hsp b(\alpha
	(\cX^n), 0)$ for $m < n$ and with $c > 0$  is a consistent scoring
	function for the $n$-th factorial moment measure.
\end{enumerate}
Both $S_1$ and $S_2$ are strictly consistent if $S$ is strictly
consistent and $b$ is strict.
\end{prop}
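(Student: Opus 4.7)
The plan is to mirror the argument of Proposition~\ref{prop:NormalizedIntens} for each of the two parts, using the defining formulas of $\mu^{(n)}$ and $\alpha^{(n)}$ in place of Campbell's theorem, and then to invoke the revelation principle (Proposition~\ref{prop:RevelationPrinciple}) to combine a scoring function for the shape $\mu^*$ (respectively $\alpha^*$) with a Bregman component for the total mass. The key observation is that any $\mu \in \Mf^n$ with $|\mu| > 0$ is in bijective correspondence with the pair $(\mu^*, |\mu|) \in \cF^n \times (0,\infty)$, while the case $|\mu| = 0$ forces $\mu$ to be the zero measure and is detected by the Bregman term alone.

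For part (i), I would fix $\mu = \mu^{(n)}$ and let $Q \in \Mf^n$ be arbitrary. Applying the defining identity of the $n$-th moment measure to $f(x_1,\ldots,x_n) = S(Q^*,x_1,\ldots,x_n) - S(\mu^*,x_1,\ldots,x_n)$ gives
\begin{align*}
\E S_1(Q,\Phi) - \E S_1(\mu,\Phi)
&= \int_{\cX^n} \bigl(S(Q^*,x) - S(\mu^*,x)\bigr)\,\dd\mu^{(n)}(x) \\
&\quad + c\,\E\!\left[b(Q(\cX^n), |\Phi|^n) - b(\mu(\cX^n), |\Phi|^n)\right] \\
&= |\mu|\int_{\cX^n}\bigl(S(Q^*,x) - S(\mu^*,x)\bigr)\,\dd\mu^*(x) \\
&\quad + c\,\E\!\left[b(Q(\cX^n), |\Phi|^n) - b(\mu(\cX^n), |\Phi|^n)\right].
\end{align*}
The first summand is nonnegative by consistency of $S$ for $\mathrm{id}_{\cF^n}$. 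For the second summand, note that $\mu(\cX^n) = \E\,|\Phi|^n$ by definition of $\mu^{(n)}$, so Theorem~\ref{thm:ElicitExpect} ensures nonnegativity via the Bregman consistency of $b$. This establishes consistency of $S_1$.

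For part (ii), the argument is identical after replacing the unrestricted sum by $\Sigma^{\neq}$, invoking the defining identity of $\alpha^{(n)}$, and using $\alpha(\cX^n) = \E\,|\Phi|^{[n]}$ so that the Bregman piece compares the reported total mass with $\E\,m^{[n]}$. The handling of the case $m < n$ is automatic since $m^{[n]} = 0$ there, and the $\Sigma^{\neq}$ sum over fewer than $n$ points is empty by convention.

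For strict consistency, suppose the expected score difference in either part is zero. If $|\mu| > 0$, strict consistency of $S$ forces $Q^* = \mu^*$, and strict consistency of $b$ combined with Theorem~\ref{thm:ElicitExpect} forces $|Q| = |\mu|$; together these yield $Q = \mu$. If $|\mu| = 0$, the shape contribution vanishes trivially and strict consistency of $b$ still forces $|Q| = 0$, hence $Q = \mu$ as the zero measure. The main technical subtlety I anticipate lies in justifying that the normalized moment measure $\mu^*$ (and $\alpha^*$) indeed lies in the class $\cF^n$ for which $S$ is consistent, and in verifying measurability/integrability so that the interchange of expectation and summation in the defining identities of $\mu^{(n)}$ and $\alpha^{(n)}$ is legitimate — but these are standard for finite point processes on a bounded $\cX$ and are tacitly part of the hypothesis $\mu, \alpha \in \Mf^n$.
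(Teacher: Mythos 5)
Your proposal is correct and follows essentially the same route as the paper, whose proof is exactly the sketch you give: repeat the argument of Proposition~\ref{prop:NormalizedIntens} with the defining identities of $\mu^{(n)}$ and $\alpha^{(n)}$ playing the role of Campbell's theorem, and combine the normalized-measure component with the Bregman component for the total mass $\mu(\cX^n)=\E\,\Phi(\cX)^n$ (resp.\ $\alpha(\cX^n)=\E\,\Phi(\cX)^{[n]}$) via the revelation principle. Your explicit decomposition of the expected score difference, the use of Theorem~\ref{thm:ElicitExpect} for the mass term, and the strictness argument all match the paper's intended reasoning.
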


In many cases of interest $\alpha^{(n)}$ is absolutely continuous with
respect to Lebesgue measure on $\cX^n$ and its density $\varrho^{(n)}$
is called \textit{product density}, see e.g.\ \citet{Chiuetal2013}.  A
(strictly) consistent scoring function for $\varrho^{(n)}$ can be
obtained from Proposition~\ref{prop:MomentM}~(ii) by choosing $S$ to
be a (strictly) consistent scoring function for densities.

\begin{example}  \label{ex:prod_log_and_quad}
Let  $n=2$ and for simplicity consider the product density
$\varrho^{(2)}$ of a stationary and isotropic point process.  In this
situation, $\varrho^{(2)}$  depends on the point distances only, i.e.\
it can be represented via $\varrho^{(2)} (x_1, x_2 ) = \varrho^{(2)}_0
( \Vert x_1 - x_2 \Vert)$ for some $\varrho^{(2)}_0 : [0, \infty) \to
[0, \infty)$.  Analogous to Example~4, we can use the quadratic
score for $b$ and the logarithmic score for $S$ in
Proposition~\ref{prop:MomentM}~(ii). This gives the strictly
consistent scoring function
\begin{align*}
S(\varrho^{(2)}, \{ y_1, \ldots, y_m\} ) &= - \underset{x_1, x_2 \in 
\{y_1, \ldots, y_m \}}{\sum\nolimits^{\neq}} \log (\varrho^{(2)}_0
(\Vert x_1 - x_2 \Vert) )  \\
&\phantom{=} + m^{[2]} \log \vert \varrho^{(2)} \vert + c \, 
( \vert \varrho^{(2)} \vert - m^{[2]})^2 ,
\end{align*}
where $c > 0$ is some scaling constant. Simulation experiments in
Section~\ref{subsec:sim_product} show how $S$ compares different
product density forecasts.
\end{example}

\subsection{Summary statistics}
\newsubsection{Summary statistics}

Summary statistics of point processes are central tools to quantify
point interactions such as clustering or inhibition.  This subsection
constructs strictly consistent scoring functions for the frequently
used $K$-function.  Throughout we assume that $\Phi$ is a
\textit{stationary} point process on $\real^d$, i.e.\ any translation
of the process by $x \in \real^d$, which we denote via $\Phi_x$, has
the same distribution as $\Phi$.  This implies that the intensity
measure of $\Phi$ is a multiple of Lebesgue measure and can be
represented via some $\lambda > 0$, see e.g.\
\citet[Chapter~4.1]{Chiuetal2013}.

A common way to describe a stationary point process is to consider its
properties in the neighbourhood of $x \in \real^d$, given that $x$ is a
point in $\Phi$.  Due to stationarity, the location of $x$ is
irrelevant and thus it is usually referred to as the ``typical point"
of $\Phi$.  The technical tool to describe the behaviour around this
point is the \textit{Palm distribution} of $\Phi$, denoted via $\P_0$
for probabilities and $\E_0$ for expectations.  It satisfies the
defining identity
\begin{align*}
\lambda \, \vert W \vert \, \E_0 f(\Phi) = \E \left( \sum_{x \in 
\Phi \cap W} f(\Phi_{-x} ) \right)
\end{align*}
for all measurable functions $f: \Mz \to \real$ such that the
expectations are finite, and it is independent of the observation
window $W \in \cB(\real^d)$ \citep[Chapter~4]{Illianetal2008}. When we
need to highlight the distribution of the point process, we write
$\E_{P,0}$ for the Palm expectation given $\Phi$ has distribution $P
\in \cP$. 

Denote the $d$-dimensional ball of radius $r> 0$ around zero via $B_r
= B(0,r)$. The \textit{K-function} of $\Phi$ is defined via
\begin{align*}
K : (0, \infty) \to [0, \infty) , \quad
r \mapsto \frac{\E_0  \Phi \left( B_r \backslash \{0\} \right)}
{\lambda} ,
\end{align*}
and it quantifies the mean number of points in a ball around the
``typical point" of $\Phi$, see e.g.\ \citet{Chiuetal2013}
and \citet{Illianetal2008} for details.  Deriving strictly
consistent scoring functions for the K-function appears challenging
since it combines the Palm distribution and the intensity. However, in
many situations both of these quantities are of interest.  We thus
derive a result which defines scoring functions for joint reports of
the $K$-function and the intensity. Our point process property of
interest is thus $\Gamma (P) := (\lambda_P , K_P)$, where the
subscript denotes the dependence of the quantities on the distribution
$P \in \cP$ of the process $\Phi$. Since observation windows are
always finite, we fix some $r^* > 0$ and let $K_P$ be the restriction
of the $K$-function to the interval $(0, r^*)$.

To derive consistent scoring functions let us fix some $r \in (0,
r^*)$ and assume for now that $\lambda_P$ is known and that instead
of data we directly observe the Palm distribution of~$\Phi$.  In this
simplified situation, $K_P(r)$ is just an expectation with respect to
$\P_0$, hence ``consistent scoring functions" for it are of the
Bregman form
\begin{align}   \label{eq:BregmanPalm}
S(x, \varphi) = - f(\lambda_P x) - f'(\lambda_P x) \big( \varphi (B_r \backslash \{0\}) -  \lambda_P x \big) ,
\end{align}
for a convex function $f : (0, \infty) \to \real$, see
Theorem~1 and Example~1.  This is because $\E_{P,0} b(x, \Phi) \ge
\E_{P,0} b(K_P(r), \Phi)$ holds for all $x \ge 0$ and $P \in \cP$.  To
arrive at a strictly consistent scoring function for the functional
$\Gamma$ three steps remain:  Firstly, we have to include a consistent
scoring function for the first component of $\Gamma$, i.e.\ the
intensity.  Moreover, we need to integrate~\eqref{eq:BregmanPalm} with
respect to $r$ in order to evaluate the $K$-function on the entire
interval $(0,r^*)$.  Finally, we have to account for the
fact that we can not observe $\P_0$, but only points of $\Phi$ on some
closed and bounded observation window $W \subset \real^d$.  Hence, we
need to compute the expected score $\E_0 S(x, \Phi)$ via an
expectation of $\Phi$ on $W$.  Such problems lead to edge corrections,
i.e.\ additional terms to account for the fact that (unobserved)
points outside of $W$ affect the estimation near the boundary of $W$,
see e.g.\ \citet[Chapter~4.7]{Chiuetal2013} for details. 
Since~\eqref{eq:BregmanPalm} is linear in $\varphi$, edge corrections
for the expected score are equivalent to edge corrections for the
expectation $\E_0 \Phi (B_r \backslash \{0\})$, which are well-known
in the context of $K$-function estimation.  Before we formalize these
three steps in a proposition, we state a result needed for the proof,
see \citet[Theorem~4]{Gneit2011}.

\begin{lemma}[revelation principle] \label{prop:RevelationPrinciple}
Let $\sA, \sA'$ be some sets and $g: \sA \rightarrow \sA'$ a bijection
with inverse $g^{-1}$.  Let $T: \cF \rightarrow \sA$ and $T_g : \cF
\rightarrow \sA'$ defined via $T_g(F) := g(T(F))$ be functionals.
Then $T$ is elicitable if and only if $T_g$ is elicitable.  A function
$S: \sA \times \sO \rightarrow \real$ is a (strictly) consistent
scoring function for $T$ if and only if $S_g : \sA' \times \sO
\rightarrow \real$, $(x,y) \mapsto S_g(x,y) := S( g^{-1}(x), y)$ is a
(strictly) consistent scoring function for $T_g$.
\end{lemma}

\begin{prop} \label{prop:Kfunction} Let
  $b_1, b_2 : [0, \infty) \times [0, \infty) \rightarrow \real$ be
  Bregman functions and $w: (0,\infty) \to [0,\infty)$ a weight
  function.  Define $\cC := \{ K_P \mid P \in \cP \}$, a set of
  possible $K$-functions, and let $\kappa$ satisfy
  $\E_P \kappa (B_r , \Phi \cap W) = \lambda_P \E_{P,0} \Phi (B_r
  \backslash \{0\} )$ for all $P \in \cP$ and $r \in (0, r^*)$.  Then
  the function $S: ((0,\infty) \times \cC) \times \Mz \to \real$
  defined via
\begin{align*}
S( (\lambda, K ), \varphi) = b_1 ( \lambda, \varphi (W) 
\vert W \vert^{-1} ) + \int_{0}^{r^*} b_2 (\lambda^2 K(r) , 
\kappa (B_r, \varphi) ) w(r) \dd r
\end{align*}
is consistent for the point process property $\Gamma (P) := 
(\lambda_P, K_P)$, where the second component is restricted to
$(0, r^*)$.  It is strictly consistent if $b_1$ and $b_2$ are strict
and $w$ is strictly positive.
\end{prop}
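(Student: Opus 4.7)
The plan is to split $S$ into its two summands and verify consistency of each via the elicitability-of-expectations machinery (Theorem~\ref{thm:ElicitExpect}), exploiting the unbiasedness built into the observation statistic in each term.

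For the intensity component, I would first note that stationarity implies $\E_P \Phi(W) = \lambda_P |W|$, so that $\Phi(W)/|W|$ is an unbiased observation for $\lambda_P$. By Theorem~\ref{thm:ElicitExpect} applied with $h(\varphi) = \varphi(W)/|W|$, the Bregman function $b_1$ gives a (strictly) consistent scoring function for the expected value functional $P \mapsto \lambda_P$. Hence
\begin{align*}
\E_P b_1(\lambda, \Phi(W)/|W|) \;\geq\; \E_P b_1(\lambda_P, \Phi(W)/|W|),
\end{align*}
with equality forcing $\lambda = \lambda_P$ when $b_1$ is strictly consistent.

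For the $K$-function component, I would use Fubini (justified by the finiteness assumption on the expected integral) to pull the expectation inside:
\begin{align*}
\E_P \int_0^\infty b_2(\lambda^2 K(r), \kappa(B_r, \Phi)) w(r) \dd r
= \int_0^\infty \E_P\bigl[ b_2(\lambda^2 K(r), \kappa(B_r, \Phi)) \bigr] w(r) \dd r.
\end{align*}
For each fixed $r$, the defining property of $\kappa$ together with the definition of $K_P$ gives $\E_P \kappa(B_r, \Phi \cap W) = \lambda_P \E_{P,0} \Phi(B_r \setminus \{0\}) = \lambda_P^2 K_P(r)$. Applying Theorem~\ref{thm:ElicitExpect} again, now with the observation $\kappa(B_r, \Phi)$, yields pointwise in $r$ the inequality
\begin{align*}
\E_P b_2(\lambda^2 K(r), \kappa(B_r,\Phi)) \;\geq\; \E_P b_2(\lambda_P^2 K_P(r), \kappa(B_r,\Phi)),
\end{align*}
and integrating against the nonnegative density $w(r)\dd r$ preserves the inequality. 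Summing the two components gives consistency of $S$.

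For strict consistency, strictness of $b_1$ forces $\lambda = \lambda_P$ from the first term. Given $\lambda = \lambda_P$, strictness of $b_2$ combined with strict positivity of $w$ forces $\lambda_P^2 K(r) = \lambda_P^2 K_P(r)$ for Lebesgue-almost every $r > 0$, i.e.\ $K(r) = K_P(r)$ for a.e.\ $r$. The main delicate point I foresee is this last step: one needs that elements of $\cC$ are determined by their a.e.\ values, which follows because any $K$-function is monotone nondecreasing in $r$ (being an expected count in the ball $B_r$), so that a.e.\ equality upgrades to equality at all continuity points, and then to equality everywhere under the mild regularity that $\cC$ consists of right-continuous functions. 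A secondary technical concern is justifying Fubini, which I would handle by assuming $b_2 \geq 0$ (or bounded below, which can be arranged since Bregman functions are defined up to affine terms in $y$) together with the stated integrability hypothesis.
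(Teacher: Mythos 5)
Your proposal is correct and takes essentially the same route as the paper: both proofs rest on Theorem~\ref{thm:ElicitExpect} applied to the unbiased statistics $\Phi(W)\vert W \vert^{-1}$ and $\kappa(B_r,\cdot)$, the Fubini--Tonelli theorem, and the identity $\E_P \kappa(B_r,\Phi) = \lambda_P \E_{P,0}\Phi(B_r\backslash\{0\}) = \lambda_P^2 K_P(r)$, followed by integration of the pointwise Bregman inequalities against $w(r)\dd r$. The only organizational difference is that the paper first proves (strict) consistency for the decoupled property $(\lambda_P, \lambda_P^2 K_P(\cdot))$ and then transfers it to $(\lambda_P, K_P)$ via the revelation principle, whereas you argue directly on the coupled report $(\lambda, K)$ and, usefully, make explicit the a.e.-to-everywhere identification of $K$ with $K_P$ (via monotonicity and right-continuity) that the paper leaves implicit in its ``standard arguments''.
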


\begin{proof}
Using Theorem~1, the Fubini-Tonelli theorem, and 
\begin{align*}
\E_P \kappa (B_r , \Phi) = \lambda_P \E_{P,0} \Phi (B_r \backslash
\{0\} ) = \lambda_P^2 K_P (r)
\end{align*}
for $r \in (0,r^*)$, standard arguments show that the scoring function
\begin{align*}
S'( (\lambda, h) , \varphi) :=  b_1 ( \lambda, \varphi (W) 
\vert W \vert^{-1} ) + \int_{0}^{r^*} b_2 (h(r) , 
\kappa (B_r, \varphi) ) w(r)  \dd r ,
\end{align*}
where $h: (0, \infty) \to (0, \infty)$ is an increasing function, is
consistent for the property
$\Gamma' (P):= ( \lambda_P , \lambda_P^2 K_P(r) )$.  An application of
the revelation principle (Lemma~\ref{prop:RevelationPrinciple}) gives
(strict) consistency for $\Gamma$.
\end{proof}

Similar to Proposition~2, this result blends two scoring components,
namely the expected number of points and their distances.  Hence,
choosing suitable Bregman functions $b_1$ and $b_2$ in applications,
again leads to issues of balancing the magnitudes of different scoring
components.  A similarly intricate question is the choice of $\kappa$.
Relevant choices result from the construction of estimators for the
$K$-function, which are often based on dividing $\kappa$ by an
estimator for $\lambda^2$.  A common choice is
\begin{equation*}
\kappa_\mathrm{st} (B_r, \varphi) := \underset{x_1, x_2 \in \varphi
\cap W}{\sum\nolimits^{\neq}} \frac{ \mathbbm{1}_{B_r} (x_2 - x_1) }
{\vert W_{x_1} \cap W_{x_2} \vert},
\end{equation*}
where $W_z := \{ x + z \mid x \in W \}$ is the shifted observation
window and $r$ is such that $\vert W \cap W_z \vert $ is positive for
all $z \in B_r$, see e.g.\ \citet[Chapter~4.3]{Illianetal2008} and
\citet[Chapter~4.7]{Chiuetal2013}.  An alternative arises via
minus-sampling, i.e.\ by reducing the observation window $W$ in order
to reduce edge effects. This yields
\begin{equation*}
\kappa_\mathrm{minus} (B_r, \varphi) := \frac{1}{\vert W \vert}
\underset{x_1, x_2 \in \varphi \cap W, \, x_2 \in W \ominus r}
{\sum\nolimits^{\neq}}  \mathbbm{1}_{B_r} (x_2 - x_1) ,
\end{equation*}
where $W \ominus r := \{ x \mid B(x,r) \subset W \}$ is the reduced
observation window and $r < \diam (W) /2$. For other choices of
$\kappa$, most notably for isotropic point processes, see
\citet[Chapter~4.7]{Chiuetal2013}.

Practitioners usually rely on the $L$-function, a modification of the
$K$-function, which is defined via $L(r) = \sqrt[d]{ K(r) /\beta_d} $
for $r \geq 0$, where $\beta_d := \vert B_1 \vert$.  It satisfies
$L(r) = r$ for the Poisson point process, and thus normalizes the
$K$-function such that it is independent of the dimension~$d$ for a
Poisson point process \citep{Chiuetal2013}.  A (strictly) consistent
scoring function for the $L$-function follows immediately from
Proposition~\ref{prop:Kfunction} and another application of the
revelation principle.  The explicit formula follows by replacing the
first component of $b_2$ by $\lambda^2 L(r)^d \beta_d$  in
Proposition~\ref{prop:Kfunction}.  The idea underlying the
construction of scoring functions for the $K$- and $L$-function
presented here can be transferred to other summary statistics for
stationary point processes.

\section{Extended simulation study}
\newsection{Extended simulation study}

\subsection{Intensity}
\newsubsection{Intensity}
\label{subsec:sim_intensity}

This subsection extends Section~4.  We give more details on the used
point processes and provide a closer analysis of the simulation
experiments in the main paper.  We then perform additional simulations
with a different scoring function and study the approximation derived
in Proposition~4.

All experiments rely on the six intensities defined in Section~4, see
Figure~\ref{fig:plotIntensities} for an illustration.  We consider two
strictly consistent scoring functions for the intensity.  The first
choice is used in Section~4 and given by
\begin{align}   \label{eq:score_intensity1}
S_1(\Lambda, \{ y_1, \ldots, y_n\} ) = - \sum_{i=1}^{n} 
\log (\lambda (y_i) ) + n \log \vert \Lambda \vert + c \, 
( \vert \Lambda \vert - n)^2  ,
\end{align}
see also Example~3.5. Our second choice is
\begin{align}   \label{eq:score_intensity2} 
S_2( \Lambda, \{y_1, \ldots, y_n\} ) =  - \sum_{i=1}^{n} \log 
\lambda (y_i) + \int_\cX \lambda (y) \dd y  ,
\end{align}
which is defined in Proposition~5.1 and appears as the limit scoring
function in earthquake likelihood model testing, see Section~5.3.  The
scaling factor $c> 0$ in~\eqref{eq:score_intensity1} is set to
$c=1/10$.  We draw $N = \Nsim$  i.i.d.\ samples and repeat $M = \Msim$
times.

\begin{figure}[p]
\centering
\includegraphics[width=0.83\textwidth]{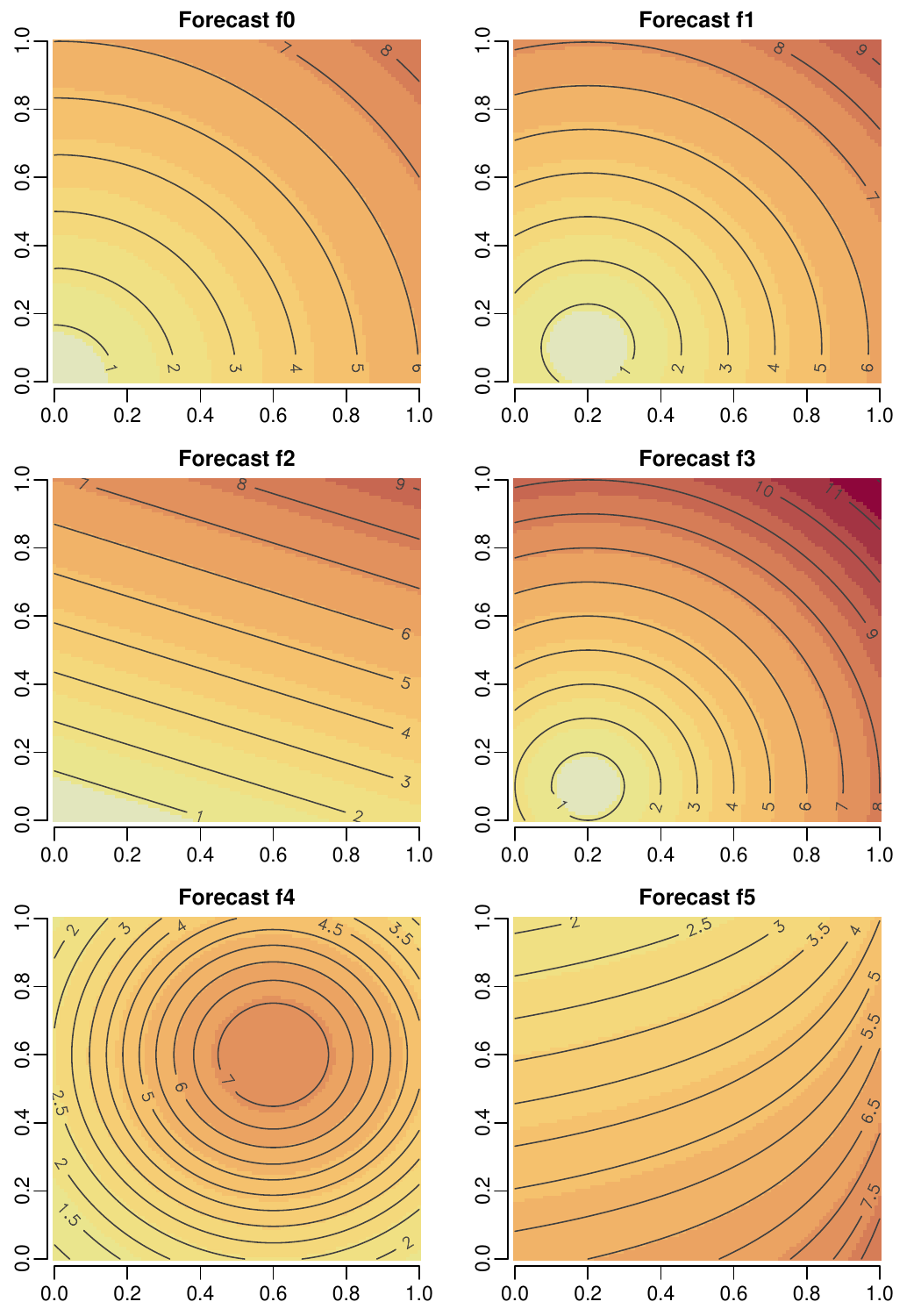}
\vspace{-.2cm}
\caption{Heat maps of the intensity forecasts $f_0, \ldots, f_5$, see Section~4.
\label{fig:plotIntensities}}
\end{figure}

\paragraph{Details on the point process models}

We consider four different data-generating processes for $\Phi$ on
$[0,1]^2$, all of which have (approximate) intensity $f_0 (x,y) = 6 
\sqrt{x^2 + y^2}$. The models are specified as follows:

\begin{enumerate}
\item An inhomogeneous Poisson point process with intensity $f_0$. 

\item A thinned Gaussian determinantal point process (DPP), see e.g.\
\citet{Houghetal2006} and \citet{Lavetal2015}.  In general, a DPP is a
locally finite point process with product densities (see 
Section~\ref{subsec:moment_measure}) given by
\begin{align*}
\varrho^{(n)} (x_1, \ldots, x_n) = \det \left( C(x_i, x_j) 
\right)_{i,j=1, \ldots, n}
\end{align*}
for $n \in \nat$, where $C : \real^d \times \real^d \to \real$ is a
covariance.  As a result, the DPP's intensity function is
$x \mapsto C(x,x)$ and it is stationary and isotropic whenever its
covariance is.  We choose $C(x_1,x_2) = C_0 (\Vert x_1 - x_2 \Vert )$,
where $C_0: [0,\infty) \to \real$ is the Gaussian covariance function
\begin{align}   \label{eq:cov_Gaussian}
C_0 (r) = \sigma^2 \exp \left\{ - \left( \frac{r}{s} \right)^2 \right\} ,
\end{align}
with variance $\sigma^2 = \max_{x,y \in [0,1]} f_0 (x,y)$ and scale
$s = 6/100$.  We then apply independent thinning to the homogeneous
Gaussian DPP in order to obtain the final point process with intensity
function $f_0$.

\item An inhomogeneous log-Gaussian Cox process (LGCP), see e.g.\
\citet[Chapter~6]{Illianetal2008}.  A LGCP is a Poisson point process
conditional on a random intensity function arising from a log-Gaussian
random field.  If $\mu: \real^d \to \real$ is the mean and $C: \real^d
\times \real^d \to \real$ is the covariance of the random field, then
the LGCP has intensity function
\begin{align*} 
x \mapsto \exp  \left( \mu (x) + \frac 12 C(x,x) 
\right) .
\end{align*}
We choose $C(x_1, x_2) = C_0 (\Vert x_1 - x_2 \Vert)$, where $C_0 :
[0,\infty) \to \real$ is the exponential covariance function
\begin{align}   \label{eq:cov_Exponential}
C_0 (r) = \sigma^2 \exp \left(- \frac{r}{s} \right) ,
\end{align}
with variance $\sigma^2 = 1/4$ and scale $s = 1/5$.  The mean is set
to $\mu(x) = \log ( f_0 (x) ) - 1/8$ such that the intensity equals
$f_0$.

\item  An inhomogeneous Thomas cluster process, see e.g.\
\citet[Chapter~6]{Illianetal2008}.  This is a cluster process which
arises from an inhomogeneous Poisson point process as parent and a
random number of cluster points which are drawn from a normal
distribution centred at its parent point.  As intensity of the parent
process we choose $2 f_0 / 3$ and the number of points per cluster
follows a Poisson distribution with parameter $3/2$.  The location of
each cluster point is determined by a normal distribution which is
centred at the parent point and where the components are uncorrelated
and have standard deviation $0.05$.  As a result of the clustering,
the intensity of the Thomas process is only approximately equal to
$f_0$.
\end{enumerate}

\begin{table}[hbt]
\centering
\caption{Fraction of replicates where the ``row forecast'' was preferred
over the ``column forecast'' by a standard DM test with level $\alpha =
\DMalpha$ based on the scoring function
$S_1$~\eqref{eq:score_intensity1}  and $M = 500$ replicates 
 \medskip
\label{tab:simPPPDPP}}
\begin{minipage}{.495\textwidth}
\centering
\textbf{Poisson}
\begin{tabular}{c cccccc }
 &  $f_0$ & $f_1$ & $f_2$ & $f_3$ & $f_4$ & $f_5$  \\
$f_0$ &   & \cellcolor{cyan!45}0.45 & \cellcolor{cyan!81}0.81 & \cellcolor{cyan!96}0.96 & \cellcolor{cyan!99}0.99 & \cellcolor{cyan!100}1.00 \\
$f_1$ & \cellcolor{cyan!0}0.00 &   & \cellcolor{cyan!40}0.40 & \cellcolor{cyan!88}0.88 & \cellcolor{cyan!82}0.82 & \cellcolor{cyan!99}0.99 \\
$f_2$ & \cellcolor{cyan!0}0.00 & \cellcolor{cyan!0}0.00 &   & \cellcolor{cyan!28}0.28 & \cellcolor{cyan!69}0.69 & \cellcolor{cyan!98}0.98 \\
$f_3$ & \cellcolor{cyan!0}0.00 & \cellcolor{cyan!0}0.00 & \cellcolor{cyan!1}0.01 &   & \cellcolor{cyan!24}0.24 & \cellcolor{cyan!91}0.91 \\
$f_4$ & \cellcolor{cyan!0}0.00 & \cellcolor{cyan!0}0.00 & \cellcolor{cyan!0}0.00 & \cellcolor{cyan!1}0.01 &   & \cellcolor{cyan!97}0.97 \\
$f_5$ & \cellcolor{cyan!0}0.00 & \cellcolor{cyan!0}0.00 & \cellcolor{cyan!0}0.00 & \cellcolor{cyan!0}0.00 & \cellcolor{cyan!0}0.00 &   \\
\end{tabular}

\end{minipage}
\begin{minipage}{.495\textwidth}
\centering
\textbf{DPP}
\begin{tabular}{c cccccc }
 &  $f_0$ & $f_1$ & $f_2$ & $f_3$ & $f_4$ & $f_5$  \\
$f_0$ &   & \cellcolor{cyan!52}0.52 & \cellcolor{cyan!83}0.83 & \cellcolor{cyan!97}0.97 & \cellcolor{cyan!99}0.99 & \cellcolor{cyan!100}1.00 \\
$f_1$ & \cellcolor{cyan!0}0.00 &   & \cellcolor{cyan!39}0.39 & \cellcolor{cyan!91}0.91 & \cellcolor{cyan!80}0.80 & \cellcolor{cyan!100}1.00 \\
$f_2$ & \cellcolor{cyan!0}0.00 & \cellcolor{cyan!0}0.00 &   & \cellcolor{cyan!27}0.27 & \cellcolor{cyan!67}0.67 & \cellcolor{cyan!97}0.97 \\
$f_3$ & \cellcolor{cyan!0}0.00 & \cellcolor{cyan!0}0.00 & \cellcolor{cyan!1}0.01 &   & \cellcolor{cyan!22}0.22 & \cellcolor{cyan!93}0.93 \\
$f_4$ & \cellcolor{cyan!0}0.00 & \cellcolor{cyan!0}0.00 & \cellcolor{cyan!0}0.00 & \cellcolor{cyan!1}0.01 &   & \cellcolor{cyan!98}0.98 \\
$f_5$ & \cellcolor{cyan!0}0.00 & \cellcolor{cyan!0}0.00 & \cellcolor{cyan!0}0.00 & \cellcolor{cyan!0}0.00 & \cellcolor{cyan!0}0.00 &   \\
\end{tabular}

\end{minipage}

\smallskip
\begin{minipage}{.495\textwidth}
\centering
\textbf{LGCP}
\begin{tabular}{c cccccc }
 &  $f_0$ & $f_1$ & $f_2$ & $f_3$ & $f_4$ & $f_5$  \\
$f_0$ &   & \cellcolor{cyan!48}0.48 & \cellcolor{cyan!80}0.80 & \cellcolor{cyan!93}0.93 & \cellcolor{cyan!99}0.99 & \cellcolor{cyan!100}1.00 \\
$f_1$ & \cellcolor{cyan!0}0.00 &   & \cellcolor{cyan!39}0.39 & \cellcolor{cyan!85}0.85 & \cellcolor{cyan!81}0.81 & \cellcolor{cyan!100}1.00 \\
$f_2$ & \cellcolor{cyan!0}0.00 & \cellcolor{cyan!0}0.00 &   & \cellcolor{cyan!27}0.27 & \cellcolor{cyan!66}0.66 & \cellcolor{cyan!97}0.97 \\
$f_3$ & \cellcolor{cyan!0}0.00 & \cellcolor{cyan!0}0.00 & \cellcolor{cyan!1}0.01 &   & \cellcolor{cyan!19}0.19 & \cellcolor{cyan!92}0.92 \\
$f_4$ & \cellcolor{cyan!0}0.00 & \cellcolor{cyan!0}0.00 & \cellcolor{cyan!0}0.00 & \cellcolor{cyan!1}0.01 &   & \cellcolor{cyan!97}0.97 \\
$f_5$ & \cellcolor{cyan!0}0.00 & \cellcolor{cyan!0}0.00 & \cellcolor{cyan!0}0.00 & \cellcolor{cyan!0}0.00 & \cellcolor{cyan!0}0.00 &   \\
\end{tabular}

\end{minipage}
\begin{minipage}{.495\textwidth}
\centering
\textbf{Thomas}
\begin{tabular}{c cccccc }
 &  $f_0$ & $f_1$ & $f_2$ & $f_3$ & $f_4$ & $f_5$  \\
$f_0$ &   & \cellcolor{cyan!24}0.24 & \cellcolor{cyan!52}0.52 & \cellcolor{cyan!76}0.76 & \cellcolor{cyan!89}0.89 & \cellcolor{cyan!100}1.00 \\
$f_1$ & \cellcolor{cyan!0}0.00 &   & \cellcolor{cyan!26}0.26 & \cellcolor{cyan!60}0.60 & \cellcolor{cyan!56}0.56 & \cellcolor{cyan!91}0.91 \\
$f_2$ & \cellcolor{cyan!0}0.00 & \cellcolor{cyan!0}0.00 &   & \cellcolor{cyan!18}0.18 & \cellcolor{cyan!44}0.44 & \cellcolor{cyan!81}0.81 \\
$f_3$ & \cellcolor{cyan!0}0.00 & \cellcolor{cyan!0}0.00 & \cellcolor{cyan!1}0.01 &   & \cellcolor{cyan!14}0.14 & \cellcolor{cyan!68}0.68 \\
$f_4$ & \cellcolor{cyan!0}0.00 & \cellcolor{cyan!0}0.00 & \cellcolor{cyan!0}0.00 & \cellcolor{cyan!1}0.01 &   & \cellcolor{cyan!78}0.78 \\
$f_5$ & \cellcolor{cyan!0}0.00 & \cellcolor{cyan!0}0.00 & \cellcolor{cyan!0}0.00 & \cellcolor{cyan!0}0.00 & \cellcolor{cyan!0}0.00 &   \\
\end{tabular}

\end{minipage}
\end{table}

\paragraph{Further details for the experiments of Section~4}

Section~4 presents four simulation experiments based on the scoring
function $S_1$.  Table~\ref{tab:simPPPDPP} shows the results of DM
tests (see \citet{DiebMari1995} and Section~\ref{sec:scenarios}) for
these experiments.  For each of the $M = \Msim$ realizations we test
whether forecast $f_i$ (row) achieves the same expected score as
forecast $f_j$ (column).  The rejection frequencies in favour of $f_0$
against $f_j$, $j = 1, \ldots, 5$ (first row of each table) are
generally in line with the mean score differences in Figure~1.
Moreover, the results of the DM tests are similar for all four
simulation experiments.  In the third and fourth experiment (lower
part of Table~\ref{tab:simPPPDPP}) the frequencies of rejection in
favour of the optimal forecast $f_0$ (first row of each table)
decrease slightly for the LGCP and substantially for the Thomas
process.  An intuitive reason for this is that clustering, which is a
feature of both processes, complicates the distinction between
different intensity forecasts.

\paragraph{Experiments with a different scoring function}

We now investigate how the forecast comparison changes when using the
scoring function $S_2$ instead of the scoring function $S_1$ from
Section~4.  Boxplots of mean score differences are given in
Figure~\ref{fig:simALL4_pois} and they are generally similar to the
ones presented in Figure~1.

\begin{figure}[bht]
\centering
\includegraphics[width = \textwidth]{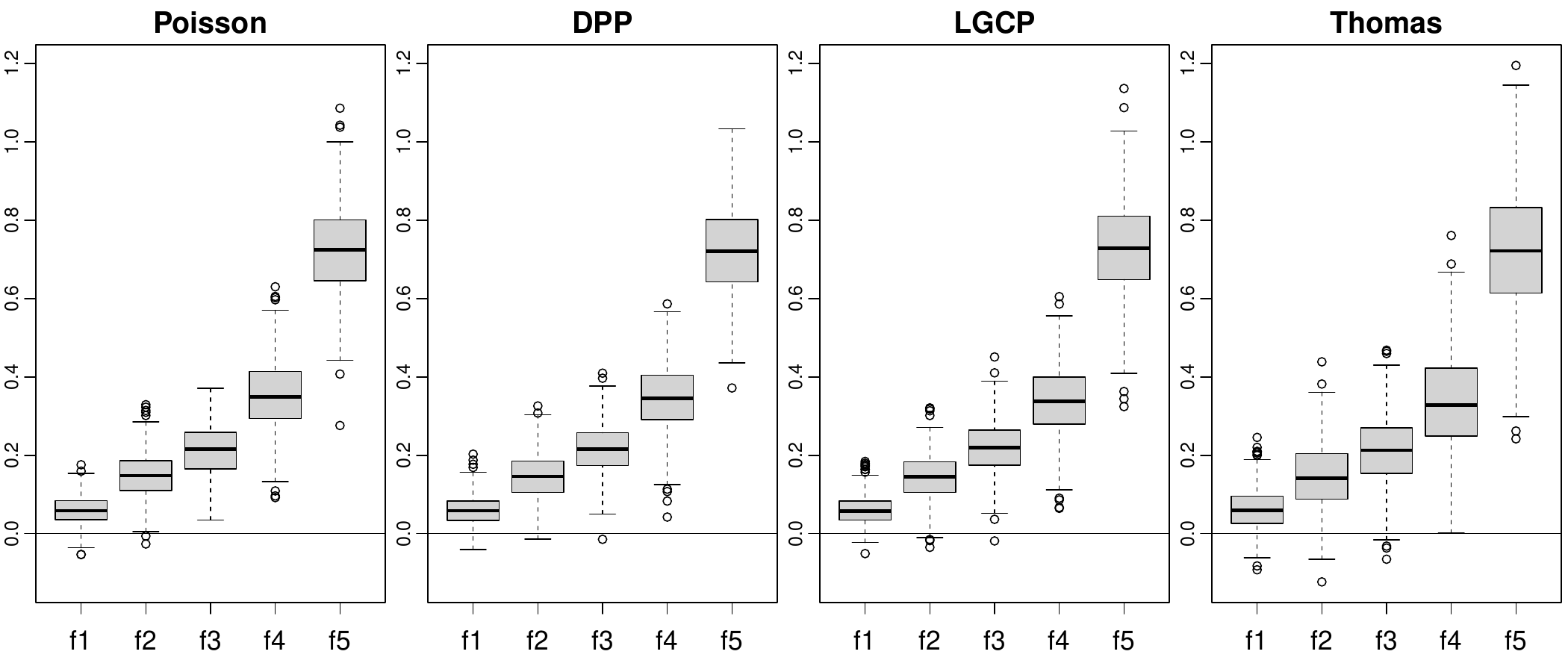}
\vspace{-.8cm}
\caption{Boxplot of difference in mean scores $\bar s_j - \bar s_0$
for $j=1, \ldots, 5$ and scoring function
$S_2$~\eqref{eq:score_intensity2}.  From left to right, $\Phi$ is a
Poisson point process, a Gaussian determinantal point process, a
log-Gaussian Cox process, or an inhomogeneous Thomas process.
Means are based on $N = \Nsim$ realizations, boxplots on $M = \Msim$
replicates.
\label{fig:simALL4_pois}}
\end{figure}

The same conclusion holds for the results of DM tests given in
Table~\ref{tab:simPoissonCox_pois} resemble those in
Table~\ref{tab:simPPPDPP}.  This suggests that in our experiments the
choice of $c = 1/10$ for $S_1$ leads to a similar balance of shape and
total mass of the intensity as with $S_2$.  However, in other forecast
settings, or with a different choice of $c$, the two scoring functions
may lead to differing conclusions.  As in the previous experiments,
the clustering of the LGCP and the Thomas process leads to less
conclusive decisions between the forecasts.  In contrast, the
inhibition of the Gaussian DPP seems to facilitate the comparison
between the forecasts.

A further sequence of experiments considers the speed of convergence
in Proposition~4, i.e.\ how well score differences based on
$S_\mathrm{cell}^{\cT_n}$, as defined in~(14), approximate score
differences based on $S_2$~\eqref{eq:score_intensity2}.  We select a
family of partitions $(\cT_n)_{n \in \nat}$ of $[0,1]^2$ which arises
from dyadic partitions of both axes.  Specifically, each grid cell
$B_{ij}^{(n)} \in \cT_n$ is given by
$[(i-1)/2^n, i/2^n ] \times [(j-1)/2^n , j/2^n]$ for
$i,j \in \{1, \ldots, 2^n \}$.  The number of cells is thus
$k_n = 2^{2n}$ and we choose $n \in \{1, \ldots, 6\}$ for the
simulations.  As forecasts we rely on the intensity functions
$f_0, \ldots, f_5$ introduced in Section~4 which we transform into
grid-based reports $f_{l,ij}^{(n)}$ by integrating $f_l$ over the grid
cell $B_{ij}^{(n)}$.  These reports are then compared to the number of
points per cells via $S_\mathrm{cell}^{\cT_n}$.  We study the
convergence of the rejection probabilities of DM tests based on
$S_\mathrm{cell}^{\cT_n}$ for $N = \Nsim$ i.i.d.\ samples of $\Phi$
and increasing $n$.  The corresponding fractions converge to the
values in Table~\ref{tab:simPoissonCox_pois}, as illustrated in
Figure~\ref{fig:simDMconvergence} for the comparisons of $f_0$ to
$f_1, \ldots, f_5$.  These simulations suggest that for forecasts
which are far from the underlying truth $n=2$, i.e.\ 16 grid cells, is
already enough to obtain DM results based on $S_\mathrm{cell}^{\cT_n}$
which are in good agreement with the results based on $S_2$
(Table~\ref{tab:simPoissonCox_pois}).  For intensity functions closer
to the truth, such as $f_1$, $n = 3$, i.e.~64 grid cells, seems
necessary to obtain a good approximation.

\begin{table}[h]
\centering
\caption{Fraction of replicates where the ``row forecast'' was preferred
over the ``column forecast'' by a standard DM test with level $\alpha =
\DMalpha$ based on the scoring function
$S_2$~\eqref{eq:score_intensity2} and $M = 500$ replicates 
 \medskip
\label{tab:simPoissonCox_pois}}
\begin{minipage}{.495\textwidth}
\centering
\textbf{Poisson} \\
\begin{tabular}{c cccccc }
 &  $f_0$ & $f_1$ & $f_2$ & $f_3$ & $f_4$ & $f_5$  \\
$f_0$ &   & \cellcolor{cyan!46}0.46 & \cellcolor{cyan!84}0.84 & \cellcolor{cyan!95}0.95 & \cellcolor{cyan!99}0.99 & \cellcolor{cyan!100}1.00 \\
$f_1$ & \cellcolor{cyan!0}0.00 &   & \cellcolor{cyan!48}0.48 & \cellcolor{cyan!87}0.87 & \cellcolor{cyan!84}0.84 & \cellcolor{cyan!100}1.00 \\
$f_2$ & \cellcolor{cyan!0}0.00 & \cellcolor{cyan!0}0.00 &   & \cellcolor{cyan!24}0.24 & \cellcolor{cyan!70}0.70 & \cellcolor{cyan!98}0.98 \\
$f_3$ & \cellcolor{cyan!0}0.00 & \cellcolor{cyan!0}0.00 & \cellcolor{cyan!1}0.01 &   & \cellcolor{cyan!29}0.29 & \cellcolor{cyan!94}0.94 \\
$f_4$ & \cellcolor{cyan!0}0.00 & \cellcolor{cyan!0}0.00 & \cellcolor{cyan!0}0.00 & \cellcolor{cyan!0}0.00 &   & \cellcolor{cyan!96}0.96 \\
$f_5$ & \cellcolor{cyan!0}0.00 & \cellcolor{cyan!0}0.00 & \cellcolor{cyan!0}0.00 & \cellcolor{cyan!0}0.00 & \cellcolor{cyan!0}0.00 &   \\
\end{tabular}

\end{minipage}
\begin{minipage}{.495\textwidth}
\centering
\textbf{DPP}
\begin{tabular}{c cccccc }
 &  $f_0$ & $f_1$ & $f_2$ & $f_3$ & $f_4$ & $f_5$  \\
$f_0$ &   & \cellcolor{cyan!47}0.47 & \cellcolor{cyan!84}0.84 & \cellcolor{cyan!96}0.96 & \cellcolor{cyan!99}0.99 & \cellcolor{cyan!100}1.00 \\
$f_1$ & \cellcolor{cyan!0}0.00 &   & \cellcolor{cyan!43}0.43 & \cellcolor{cyan!93}0.93 & \cellcolor{cyan!86}0.86 & \cellcolor{cyan!100}1.00 \\
$f_2$ & \cellcolor{cyan!0}0.00 & \cellcolor{cyan!0}0.00 &   & \cellcolor{cyan!22}0.22 & \cellcolor{cyan!69}0.69 & \cellcolor{cyan!98}0.98 \\
$f_3$ & \cellcolor{cyan!0}0.00 & \cellcolor{cyan!0}0.00 & \cellcolor{cyan!1}0.01 &   & \cellcolor{cyan!28}0.28 & \cellcolor{cyan!93}0.93 \\
$f_4$ & \cellcolor{cyan!0}0.00 & \cellcolor{cyan!0}0.00 & \cellcolor{cyan!0}0.00 & \cellcolor{cyan!1}0.01 &   & \cellcolor{cyan!97}0.97 \\
$f_5$ & \cellcolor{cyan!0}0.00 & \cellcolor{cyan!0}0.00 & \cellcolor{cyan!0}0.00 & \cellcolor{cyan!0}0.00 & \cellcolor{cyan!0}0.00 &   \\
\end{tabular}

\end{minipage}

\smallskip
\begin{minipage}{.495\textwidth}
\centering
\textbf{LGCP}
\begin{tabular}{c cccccc }
 &  $f_0$ & $f_1$ & $f_2$ & $f_3$ & $f_4$ & $f_5$  \\
$f_0$ &   & \cellcolor{cyan!44}0.44 & \cellcolor{cyan!79}0.79 & \cellcolor{cyan!93}0.93 & \cellcolor{cyan!98}0.98 & \cellcolor{cyan!100}1.00 \\
$f_1$ & \cellcolor{cyan!0}0.00 &   & \cellcolor{cyan!39}0.39 & \cellcolor{cyan!84}0.84 & \cellcolor{cyan!83}0.83 & \cellcolor{cyan!99}0.99 \\
$f_2$ & \cellcolor{cyan!0}0.00 & \cellcolor{cyan!0}0.00 &   & \cellcolor{cyan!27}0.27 & \cellcolor{cyan!68}0.68 & \cellcolor{cyan!96}0.96 \\
$f_3$ & \cellcolor{cyan!0}0.00 & \cellcolor{cyan!0}0.00 & \cellcolor{cyan!0}0.00 &   & \cellcolor{cyan!23}0.23 & \cellcolor{cyan!92}0.92 \\
$f_4$ & \cellcolor{cyan!0}0.00 & \cellcolor{cyan!0}0.00 & \cellcolor{cyan!0}0.00 & \cellcolor{cyan!1}0.01 &   & \cellcolor{cyan!97}0.97 \\
$f_5$ & \cellcolor{cyan!0}0.00 & \cellcolor{cyan!0}0.00 & \cellcolor{cyan!0}0.00 & \cellcolor{cyan!0}0.00 & \cellcolor{cyan!0}0.00 &   \\
\end{tabular}

\end{minipage}
\begin{minipage}{.495\textwidth}
\centering
\textbf{Thomas}
\begin{tabular}{c cccccc }
 &  $f_0$ & $f_1$ & $f_2$ & $f_3$ & $f_4$ & $f_5$  \\
$f_0$ &   & \cellcolor{cyan!24}0.24 & \cellcolor{cyan!53}0.53 & \cellcolor{cyan!73}0.73 & \cellcolor{cyan!86}0.86 & \cellcolor{cyan!99}0.99 \\
$f_1$ & \cellcolor{cyan!0}0.00 &   & \cellcolor{cyan!26}0.26 & \cellcolor{cyan!58}0.58 & \cellcolor{cyan!53}0.53 & \cellcolor{cyan!92}0.92 \\
$f_2$ & \cellcolor{cyan!0}0.00 & \cellcolor{cyan!1}0.01 &   & \cellcolor{cyan!16}0.16 & \cellcolor{cyan!42}0.42 & \cellcolor{cyan!80}0.80 \\
$f_3$ & \cellcolor{cyan!0}0.00 & \cellcolor{cyan!0}0.00 & \cellcolor{cyan!1}0.01 &   & \cellcolor{cyan!15}0.15 & \cellcolor{cyan!69}0.69 \\
$f_4$ & \cellcolor{cyan!0}0.00 & \cellcolor{cyan!0}0.00 & \cellcolor{cyan!0}0.00 & \cellcolor{cyan!1}0.01 &   & \cellcolor{cyan!79}0.79 \\
$f_5$ & \cellcolor{cyan!0}0.00 & \cellcolor{cyan!0}0.00 & \cellcolor{cyan!0}0.00 & \cellcolor{cyan!0}0.00 & \cellcolor{cyan!0}0.00 &   \\
\end{tabular}

\end{minipage}
\end{table}

\begin{figure}[p]
\centering
\includegraphics[width = \textwidth]{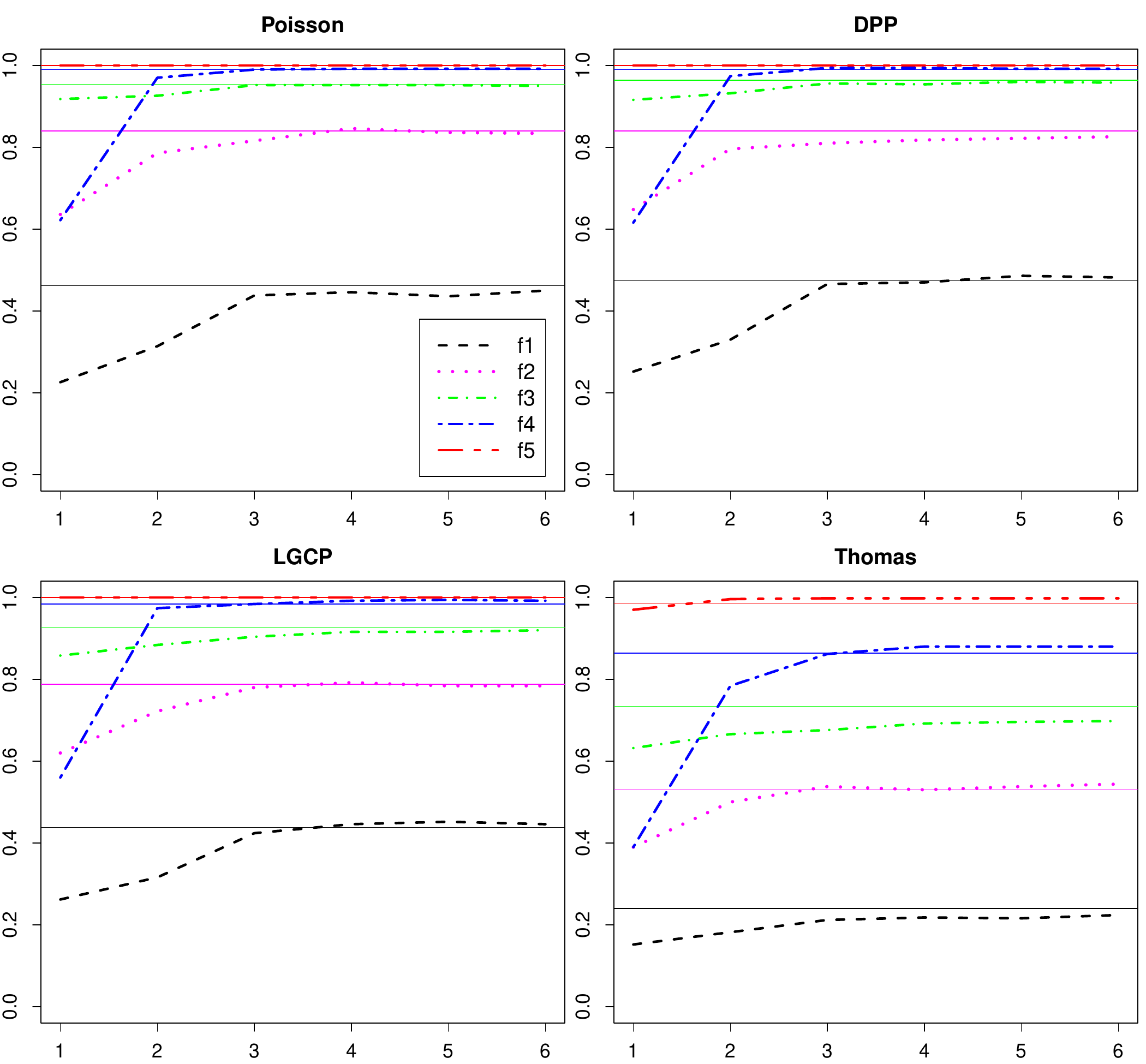}
\caption{Fraction of replicates where $f_0$ was preferred over $f_1, \ldots, f_5$
by a standard DM test with level $\alpha = \DMalpha$ based on the
scoring function $S_\mathrm{cell}^{\cT_n}$, with $n$ varying along the horizontal axis,
sample size $N = \Nsim$, and $M = \Msim$ replicates.  The solid lines represent
the fractions resulting from the use of $S_2$ 
(see~\eqref{eq:score_intensity2}), as given in 
Table~\ref{tab:simPoissonCox_pois}.  The legend in the upper left plot
applies to all other plots, too.
\label{fig:simDMconvergence}}
\end{figure}

\subsection{Product density}
\newsubsection{Product density}
\label{subsec:sim_product}

This subsection presents simulation experiments for the product
density (Section~\ref{subsec:moment_measure}).  We simulate stationary and
isotropic point processes with three different second order structures
corresponding to inhibition, clustering, and no interaction.  We draw
$N=30$  i.i.d.\ samples $\varphi_i$ from $\Phi$ and compare the mean
scores for different forecasts, in the same way as in Section~4.  The 
scoring function $S$ is defined in Example~\ref{ex:prod_log_and_quad}
and the scaling factor $c = 10^{-5}$ is chosen such that the log and
squared terms are of the same order of magnitude.  We repeat the
simulations $M=500$ times to assess the variation in mean scores.

\paragraph{Details on the point process models}

We simulate three different stationary and isotropic data-generating
processes $\Phi$ on the window $[0,1]^2$ with intensity $\lambda =
25$.  The models are specified as follows:

\begin{enumerate}

\item A LGCP which is determined by a stationary and isotropic
Gaussian process with mean $\mu \in \real$ and covariance function
$C_0$, see e.g.\ \citet{Illianetal2008}.  Its second order product
density $\varrho^{(2)} : \real^d \times \real^d \to \real$ is given by
$\varrho^{(2)} (x_1, x_2) = \varrho^{(2)}_0 (\Vert x_1 - x_2 \Vert )$,
where
\begin{align*}
\varrho^{(2)}_0 (r) = \exp \left( 2 \mu + C_0(0) + C_0(r) \right) .
\end{align*}
We choose $C_0$ as the Gaussian covariance
function~\eqref{eq:cov_Gaussian} with variance $\sigma^2 = \log 2$ and
scale $s = 5/100$ and set $\mu = \log (\lambda) - \sigma^2 /2$.

\item A homogeneous Poisson point process.

\item A DPP defined via the Gaussian covariance 
function~\eqref{eq:cov_Gaussian}, see e.g.\ \citet{Houghetal2006} and
\citet{Lavetal2015}.  Its second order product density is given by 
$\varrho^{(2)} (x_1, x_2) = \varrho^{(2)}_0 (\Vert x_1 - x_2 \Vert )$,
where
\begin{align*}
\varrho^{(2)}_0 (r) = C_0 (0)^2 - C_0 (r)^2 ,
\end{align*}
and $C_0$ is the Gaussian covariance~\eqref{eq:cov_Gaussian} with
variance $\sigma^2 = \lambda^2$ and scale $s=0.06$.
\end{enumerate}

\paragraph{Forecast comparison}

The three simulation experiments compare five different
product density forecasts, which are based on stationary and isotropic
point processes, see Example~\ref{ex:prod_log_and_quad}.  Hence, the
forecasts take the form $\varrho^{(2)} (x_1, x_2 ) = \varrho^{(2)}_0
(\Vert x_1 - x_2 \Vert)$, with the function $\varrho^{(2)}_0$ given by
\begin{align*}
f_1 (r) &= \exp \left[ 2 \mu + \sigma^2 \left\{ 1 + \exp( - 400 r^2 )
\right\} \right] \\
f_2 (r) &= \exp \left[ 2 \mu + \sigma^2 \left\{ 1 + \exp( - 20 r ) 
\right\} \right] \\
f_3 (r) &= \lambda^2 \\
f_4 (r) &= \lambda^2 \left\{ 1 - \exp ( - 2 r/s ) \right\} \\
f_5 (r) &= \lambda^2 \left\{ 1 - \exp ( - 2 (r /s)^2 ) \right\} ,
\end{align*}
where $\mu = \log (\lambda) - \sigma^2 /2$, $\sigma^2 = \log(2)$,
$s = 0.06$, and $\lambda = 25$.  See Figure~\ref{fig:plotProdDens} for
a graphical comparison of the different functions.  The forecasts
$f_1$ and $f_2$ represent clustering, since they arise as product
densities of LGCPs with Gaussian or exponential covariance function
(see~\eqref{eq:cov_Gaussian} and~\eqref{eq:cov_Exponential}).  The
constant function $f_3$ corresponds to a homogeneous Poisson process.
The forecasts $f_4$ and $f_5$ arise as product densities of DPPs with
Gaussian or exponential covariance function and thus represent
inhibition.  Our parameter choices ensure that the point process
models corresponding to $f_1, \ldots, f_5$ all have intensity equal
to $\lambda$, so forecast misspecifications only occur in the product
density.

\begin{figure}[htb]
\centering
\includegraphics[width = 0.8\textwidth]{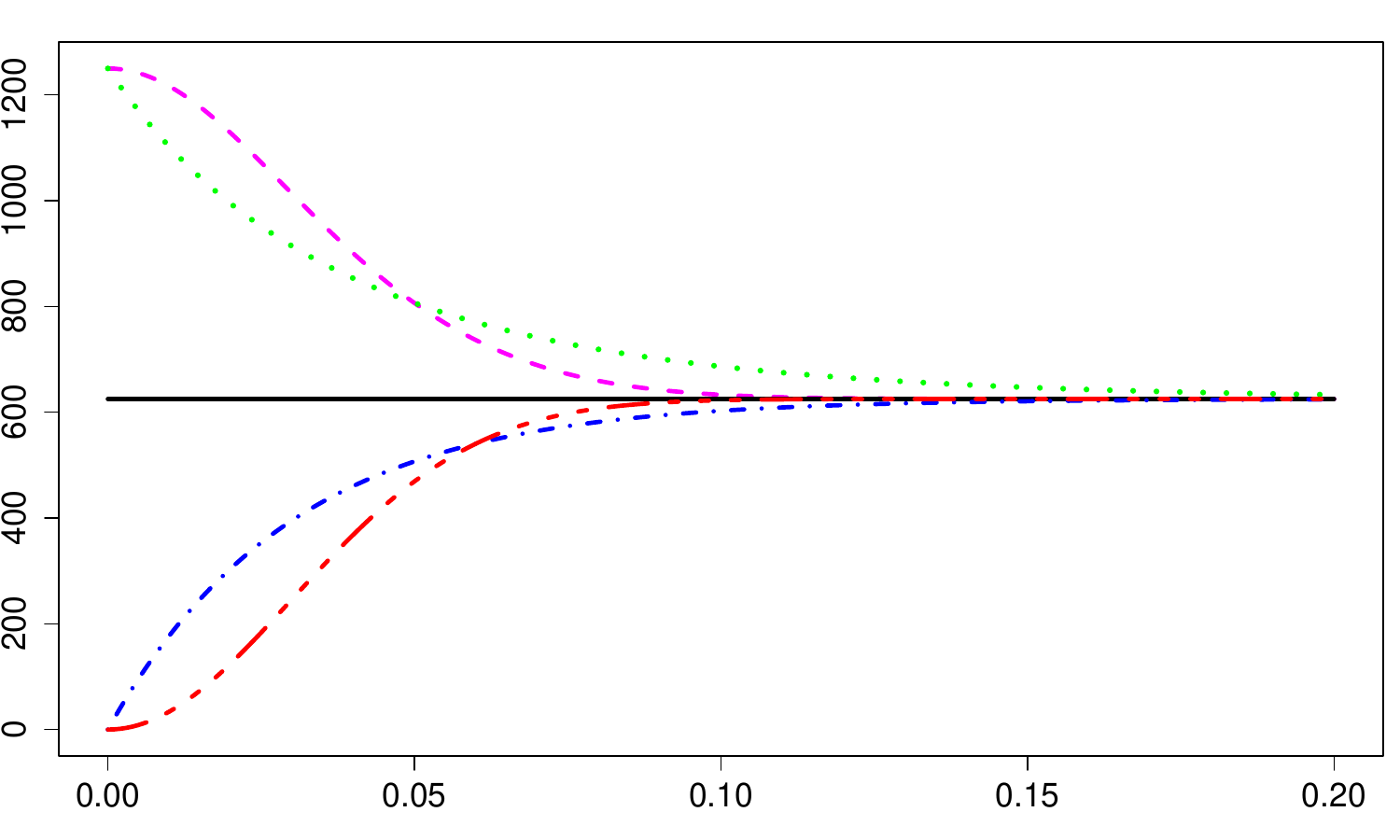}
\caption{Plot of the five different choices for $\varrho^{(2)}_0 : 
[0,\infty) \to [0, \infty)$ on which the product density forecasts in
Section~\ref{subsec:sim_product} are based.  The first two ($f_1$ and
$f_2$) represent clustering, the last two ($f_4$ and $f_5$)
inhibition.  The constant $f_3$ implies no interaction.
\label{fig:plotProdDens}}
\end{figure}

\begin{figure}[htb]
\centering
\includegraphics[width = \textwidth]{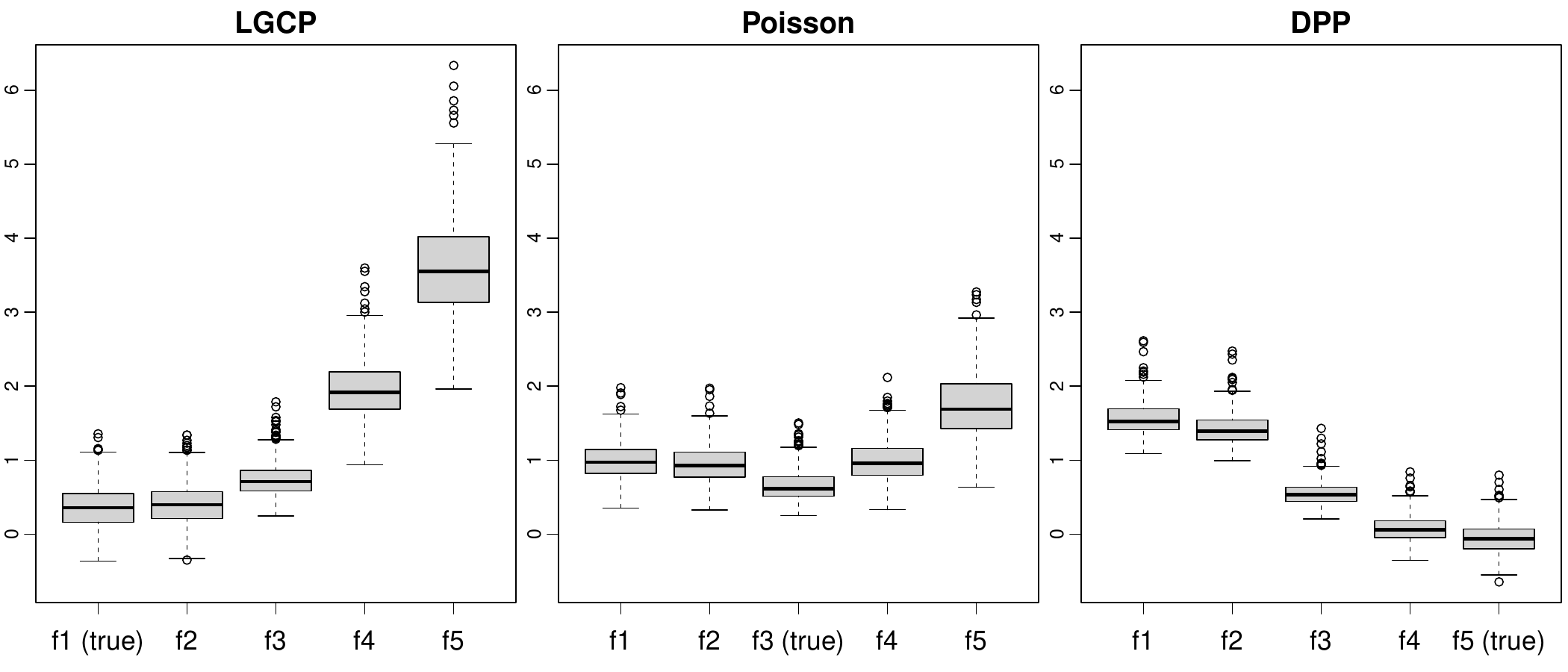}
\vspace{-.8cm}
\caption{Boxplots of mean scores $\bar s_j$ for different product
density forecasts, where $\Phi$ is a log-Gaussian Cox process (left),
a homogeneous Poisson process (centre), or a Gaussian determinantal
point process (right).  Means are based on $N = 30$ realizations,
boxplots on $M = 500$ replicates. 
\label{fig:simPPPCox}}
\end{figure}

In the first experiment the true $\Phi$ is a LGCP with a Gaussian
covariance function such that its product density corresponds to
$f_1$.  In the second experiment $\Phi$ is a homogeneous Poisson
process with intensity $\lambda$, such that $f_3$ becomes the optimal
forecast in this situation.  Lastly, we let $\Phi$ be a DPP with
Gaussian covariance function and parameters such that $f_5$ is
optimal.  We thus perform one experiment for each of the three
phenomena clustering, no interaction, and inhibition.

The simulated mean scores are displayed in Figure~\ref{fig:simPPPCox}
for all three experiments.  The optimal forecast consistently achieves
the lowest mean score.  In the case of clustering (left subfigure) the
LGCP related forecasts $f_1$ and $f_2$ perform roughly similar, while
the misspecified no interaction and inhibition forecasts $f_3$, $f_4$
and $f_5$ lead to considerably higher mean scores.  A similar, but
mirrored behaviour is apparent in the inhibition experiment (right
subfigure): The forecast $f_4$, which gets the nature of point
interactions right, attains low mean scores, even though it is not
optimal.  The mean scores of the Poisson forecast $f_3$ are always in
between the ``extremes''.  The DM test probabilities of the three
experiments are given in Table~\ref{tab:simPoissonCox_product} and
support these observations.  Additionally, the DM results illustrate
that the clustering forecasts $f_1$ and $f_2$ are preferred more often
over the inhibition forecast $f_5$ in the case of Poisson data (centre
table).

\begin{table}[htb]
\centering
\caption{Fraction of times the ``row forecast'' was preferred over the
``column forecast'' by a standard DM test with level $\alpha = 
\DMalpha$ in the product density experiments
(Section~\ref{subsec:sim_product}), based on $M = \Msim$ repetitions
\medskip
\label{tab:simPoissonCox_product}}
\begin{minipage}{.495\textwidth}
\centering
\textbf{LGCP} \\
\begin{tabular}{c ccccc }
 &  $f_1$ & $f_2$ & $f_3$ & $f_4$ & $f_5$  \\
$f_1$ &   & \cellcolor{cyan!18}0.18 & \cellcolor{cyan!63}0.63 & \cellcolor{cyan!99}0.99 & \cellcolor{cyan!100}1.00 \\
$f_2$ & \cellcolor{cyan!0}0.00 &   & \cellcolor{cyan!57}0.57 & \cellcolor{cyan!99}0.99 & \cellcolor{cyan!100}1.00 \\
$f_3$ & \cellcolor{cyan!0}0.00 & \cellcolor{cyan!0}0.00 &   & \cellcolor{cyan!100}1.00 & \cellcolor{cyan!100}1.00 \\
$f_4$ & \cellcolor{cyan!0}0.00 & \cellcolor{cyan!0}0.00 & \cellcolor{cyan!0}0.00 &   & \cellcolor{cyan!100}1.00 \\
$f_5$ & \cellcolor{cyan!0}0.00 & \cellcolor{cyan!0}0.00 & \cellcolor{cyan!0}0.00 & \cellcolor{cyan!0}0.00 &   \\
\end{tabular}

\end{minipage}
\begin{minipage}{.495\textwidth}
\centering
\textbf{DPP} \\
\begin{tabular}{c ccccc }
 &  $f_1$ & $f_2$ & $f_3$ & $f_4$ & $f_5$  \\
$f_1$ &   & \cellcolor{cyan!0}0.00 & \cellcolor{cyan!0}0.00 & \cellcolor{cyan!0}0.00 & \cellcolor{cyan!0}0.00 \\
$f_2$ & \cellcolor{cyan!90}0.90 &   & \cellcolor{cyan!0}0.00 & \cellcolor{cyan!0}0.00 & \cellcolor{cyan!0}0.00 \\
$f_3$ & \cellcolor{cyan!100}1.00 & \cellcolor{cyan!100}1.00 &   & \cellcolor{cyan!0}0.00 & \cellcolor{cyan!0}0.00 \\
$f_4$ & \cellcolor{cyan!100}1.00 & \cellcolor{cyan!100}1.00 & \cellcolor{cyan!100}1.00 &   & \cellcolor{cyan!0}0.00 \\
$f_5$ & \cellcolor{cyan!100}1.00 & \cellcolor{cyan!100}1.00 & \cellcolor{cyan!96}0.96 & \cellcolor{cyan!57}0.57 &   \\
\end{tabular}

\end{minipage}

\smallskip
\textbf{Poisson} \\
\begin{tabular}{c ccccc }
 &  $f_1$ & $f_2$ & $f_3$ & $f_4$ & $f_5$  \\
$f_1$ &   & \cellcolor{cyan!1}0.01 & \cellcolor{cyan!0}0.00 & \cellcolor{cyan!4}0.04 & \cellcolor{cyan!43}0.43 \\
$f_2$ & \cellcolor{cyan!17}0.17 &   & \cellcolor{cyan!0}0.00 & \cellcolor{cyan!6}0.06 & \cellcolor{cyan!50}0.50 \\
$f_3$ & \cellcolor{cyan!77}0.77 & \cellcolor{cyan!68}0.68 &   & \cellcolor{cyan!64}0.64 & \cellcolor{cyan!96}0.96 \\
$f_4$ & \cellcolor{cyan!7}0.07 & \cellcolor{cyan!5}0.05 & \cellcolor{cyan!0}0.00 &   & \cellcolor{cyan!100}1.00 \\
$f_5$ & \cellcolor{cyan!0}0.00 & \cellcolor{cyan!0}0.00 & \cellcolor{cyan!0}0.00 & \cellcolor{cyan!0}0.00 &   \\
\end{tabular}

\end{table}

\section{Additional details for the case study}
\newsection{Additional details for the case study}

This material extends Section~5.2.  Figure~\ref{fig:plot_quad_time}
reproduces Figure~3 but with the quadratic score $S_\mathrm{quad}$
rather than the Poisson score $S_\mathrm{pois}$.  In contrast to
Figure~3 we see that there are periods without events where the LG
model rather than the FMC model attains the lowest scores.

Figures~\ref{fig:spatial_comp_LG} and~\ref{fig:spatial_comp_SMA} use
the same methods as in Figure~4 to compare the LM model to the LG and
the SMA model.  The regions of superior or inferior forecast
performance of the LM model remain generally the same across the three
comparisons.  The right plots of these figures compare the forecasts
after spatial aggregation, for which we give details now.

\begin{figure}[htb]
\centering
\includegraphics[width = 0.7\textwidth]{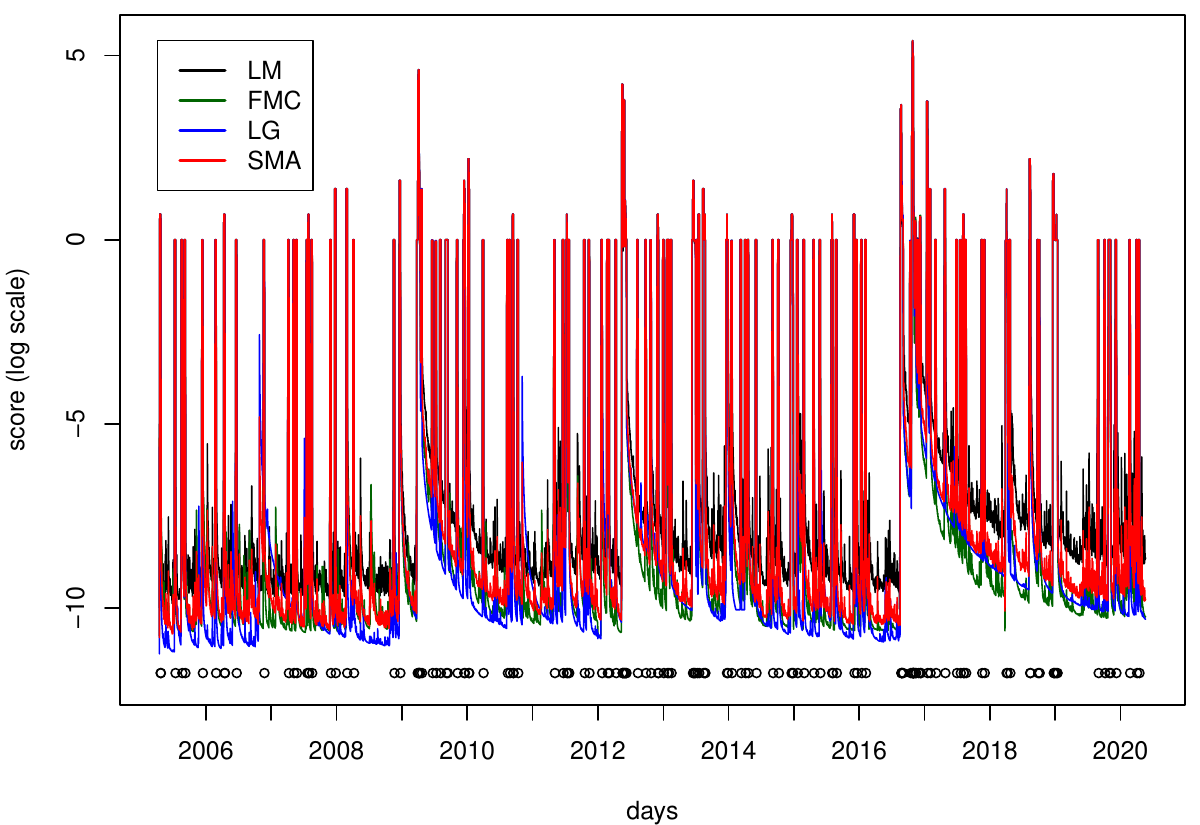}
\caption{Daily scores $s_{j,t}$ from (10) based on $S_\mathrm{quad}$ for the four forecasting models from 2005 to 2020, logarithmic scale.  The circles indicate the days of M4+ earthquakes and the tickmarks on the horizontal axis mark the first day of each year.
\label{fig:plot_quad_time}}
\end{figure}

\begin{figure}[p]
\centering
\begin{minipage}{.495\textwidth}
	\includegraphics[clip, width = \textwidth, trim = 8 0 10 0]{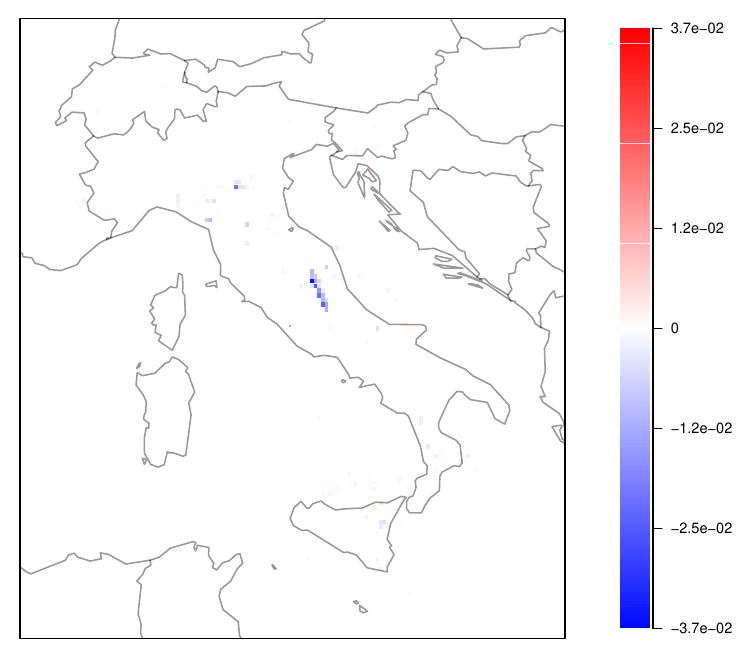}
\end{minipage}
\begin{minipage}{.495\textwidth}
	\includegraphics[clip, width = \textwidth, trim = 8 0 10 0]{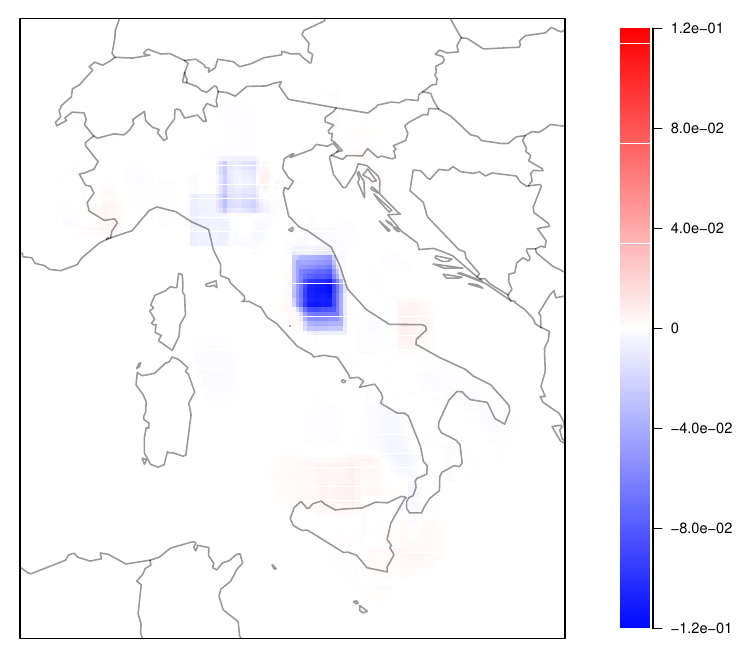}
	\centering
\end{minipage}
\vspace{-2mm}
\caption{Mean score difference based on $S_\mathrm{pois}$ (11)
  between the LM and the LG model, without (left) and with (right) aggregation. Negative values (blue) indicate that the LM model has superior forecast performance, and positive values (red) vice versa.
	\label{fig:spatial_comp_LG}
}
\vspace{5mm}
\begin{minipage}{.495\textwidth}
	\includegraphics[clip, width = \textwidth, trim = 8 0 10 0]{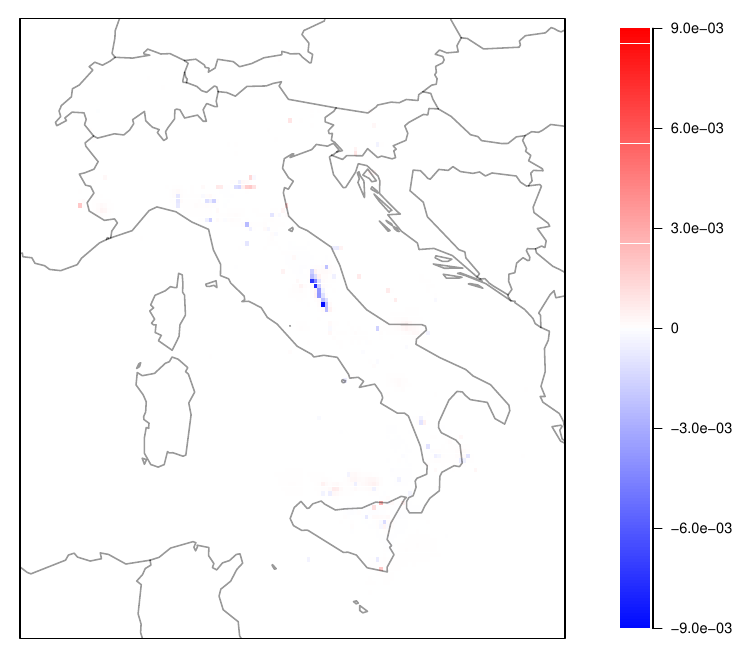}
\end{minipage}
\hfill
\begin{minipage}{.495\textwidth}
	\includegraphics[clip, width = \textwidth, trim = 8 0 10 0]{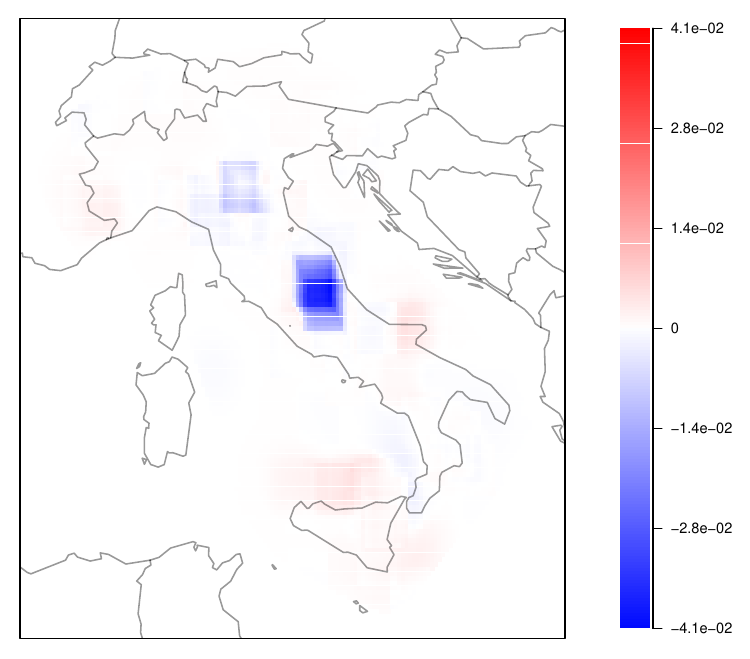}
	\centering
\end{minipage}
\vspace{-2mm}
\caption{Mean score difference based on $S_\mathrm{pois}$ (11)
  between the LM and the SMA model, without (left) and with (right) aggregation. Negative values (blue) indicate that the LM model has superior forecast performance, and positive values (red) vice versa.
	\label{fig:spatial_comp_SMA}
}
\end{figure}

\subsection{Spatial aggregation}
\newsubsection{Spatial aggregation}

We follow the notation of Section~5.2, except that we introduce a
coordinate notation for the testing region (Figure~2).  For each grid
cell $B_i$ we now write $B_{k,l}$ where $k$ is the horizontal and $l$
the vertical coordinate.  A cell with a higher value of $k$ is further
east and a cell with a higher value of $l$ is further north.
Similarly, let $x_{k,l,t}^{(j)}$ be the forecast of model $j$
corresponding to cell $B_{k,l}$ on day $t$.  For combinations of $k$
and $l$ that fall outside the testing region we use the convention
$x_{k,l,t}^{(j)} = 0$ and $B_{k,l} = \emptyset$.

Let $\delta \in \nat_0$ be a given level of aggregation.  We define
the locally aggregated forecast and the locally aggregated grid cell
at coordinate $(k,l)$ and aggregation level $\delta$ via
\begin{align*}
\bar x_{k,l,t}^{(j)} := \sum_{\mu = - \delta}^{\delta} \sum_{\nu = - \delta}^{\delta} x_{k+\mu, l + \nu, t}^{(j)}
\qquad \text{ and } \qquad
\bar B_{k,l} := \bigcup_{\mu = - \delta}^{\delta} \bigcup_{\nu = - \delta}^{\delta} B_{k + \mu, l + \nu}
\end{align*}
respectively.  In the interior of the testing region, this is an
aggregation of the forecasts over a square neighbourhood with edge
length $2\delta + 1$ centred at $(k,l)$.  At the boundary of the
testing region the aggregation neighbourhoods will be smaller, however,
as there are almost no events in this area, this does not affect the
plots.  Due to the linearity of expectations, the values $\bar
x_{k,l,t}^{(j)}$ are again valid mean forecasts that can be compared
via consistent scoring functions, e.g.\ the Poisson score~(9).  The
right plots of Figures~4, \ref{fig:spatial_comp_LG}, 
and~\ref{fig:spatial_comp_SMA} show this comparison via the mean score
difference of the locally aggregated forecasts
\begin{align*}
\bar \Delta_{k,l}^{(j,j')} := \frac{1}{5514} \sum_{t=1}^{5514} \big( S_\mathrm{pois}( \bar x_{k,l,t}^{(j)} , \varphi_t (\bar B_{k,l} ) ) - S_\mathrm{pois}( \bar x_{k,l,t}^{(j')} , \varphi_t (\bar B_{k,l} ) ) \big) ,
\end{align*}
where $\delta = 5$.  For $\delta = 0$ there is no aggregation, so
$\bar \Delta_{k,l}^{(j,j')}$ simplifies to $\Delta_i^{(j,j')}$, the
(non-aggregated) mean score difference~(11).  For $\delta$ large
enough there is essentially only one big grid cell and one forecasted
number remaining.  The corresponding plot would show only one colour,
indicating the forecast performance of the models with respect to the
total number of events in the testing region.

\subsection{Sample size considerations}
\newsubsection{Sample size considerations}

Point process forecasting is often challenged by a lack of data, and
particularly a lack of data to properly test newly proposed prediction
models.  In this light, a critical question is how much data is
required to reach valid conclusions on superior predictive ability.  As
discussed, a commonly used tool is the Diebold--Mariano (DM) test, which
is a one-sample $t$-test applied to the score differentials, with adaptations to time series settings.  Standard
power calculations for $t$-tests apply to independent samples, and a
well known, crude rule of thumb \citep{Lehr1992, vanBelle2008} states
that for a one-sample, two-tailed $t$-test with level 0.05, a sample
size $n = 8 s^2/d^2$ yields an approximate power of 0.80, where $s^2$
is the variance of the score differentials, and $d$ is the difference
to be detected.  Phrased differently, if the variance $s^2$ and the
sample size $n$ are given, a difference $d_n = (8 s^2/n)^{1/2}$ is detectable, subject to the above specifications of the size and the power of the $t$-test. 

In Tables \ref{tab:pois} and \ref{tab:quad} we return to Table~1 in the
main paper, where we compare the predictive performance of the LM, FMC,
LG, and SMA models, respectively.  We show the mean score differential
and its variance, and find the detectable difference $d_{5514}$ at the
given sample size of $n = 5514$ daily forecasts of earthquake activity
over the subsequent seven-day period, for the Poisson score and the
quadratic score, respectively.  Figures~\ref{fig:pois} and
\ref{fig:quad} show the sample autocorrelation function for the score
differentials.  Not surprisingly, there is considerable dependency at
lags up to about seven to nine days ahead, due to the overlap in the
seven-day outlook, though autocorrelations are small to negligible at
higher lags.  As standard power calculations assume independent samples, a more
appropriate quantification of a detectable difference is based on a
sample size of $[5514/7] = 787$.  A further alternative is to use an 
estimate of the effective sample size \citep{ThieZwiers1984}, which 
reduces the regular sample size according to the autocorrelation of
the series, in line with the handling of dependencies in DM tests.

Interestingly, under both the Poisson
and the quadratic score, and for each of the six binary model
comparisons, the actual mean score differential $m$ tends to be nested in
between the (overly) optimistic estimate $d_{5514}$ and the (arguably)
realistic estimate $d_{787}$ for a detectable difference, which indicates that the comparative evaluation might reasonably be considered to be based on sufficient data.  Evidently,
this current analysis is crude and preliminary, using default
specifications from the biostatistical literature for size and power, and we encourage
follow-up studies.

\begin{table}[htb]
\caption{Mean $m$ and variance $s^2$ of the score differential, and detectable difference $d_n$ for sample size $n = 787$ and $n = 5514$ according to the rule of thumb by \citet{Lehr1992}, under the Poisson score and for the models from Table 1 in the main paper.  \label{tab:pois}}
\medskip
\centering
\vspace{2mm}
\footnotesize
\begin{tabular}{l cccccc}
\hline 
Poisson score & LG$-$LM & LG$-$SMA & LG$-$FMC & FMC$-$LM & FMC$-$SMA & SMA$-$LM \\
\hline 
Mean $m$ & 0.307 & 0.285 & 0.221 & 0.086 & 0.064 & 0.022 \\
Variance $s^2$ &11.936 & 6.438 & 4.885 & 2.542 & 0.695 & 0.983 \\
\hline
$d_{5514}$ & 0.132 & 0.097 & 0.084 & 0.061 & 0.032 & 0.038 \\
$d_{787}$ & 0.348 & 0.256 & 0.223 & 0.161 & 0.084 & 0.100 \\
\hline
\end{tabular}

\end{table}

\begin{table}[htb]
\caption{Same as Table \ref{tab:pois}, but under the quadratic score. All entries are to be divided by a factor of 100.  \label{tab:quad}}
\medskip
\centering
\vspace{2mm}
\footnotesize
\begin{tabular}{l cccccc}
\hline 
Quadratic score & LG$-$LM & FMC$-$LM & SMA$-$LM & LG$-$SMA & FMC$-$SMA & LG$-$FMC \\
\hline 
Mean $m$ & 0.563 & 0.505 & 0.293 & 0.270 & 0.211 & 0.058 \\
Variance $s^2$ &1.690 & 1.303 & 0.605 & 0.295 & 0.159 & 0.152 \\
\hline
$d_{5514}$ & 0.495 & 0.435 & 0.296 & 0.207 & 0.152 & 0.149 \\
$d_{787}$ & 1.311 & 1.151 & 0.784 & 0.548 & 0.402 & 0.393 \\
\hline
\end{tabular}

\end{table}

\begin{figure}[p]
\centering
\vspace{3mm}
\includegraphics[width = \textwidth]{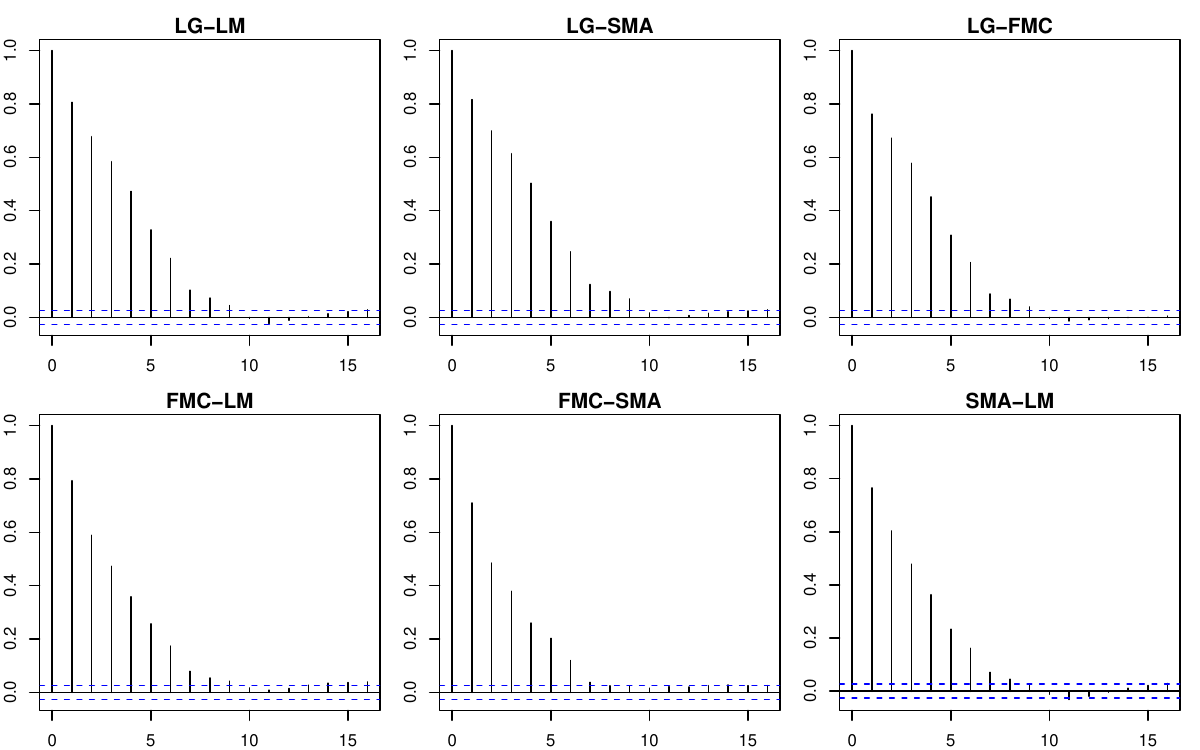}
\caption{Sample autocorrelation function of the Poisson score differentials for the forecasts from Table~1 in the main paper, with lag in days \label{fig:pois}}
\end{figure}

\begin{figure}[p]
\centering
\vspace{3mm}
\includegraphics[width = \textwidth]{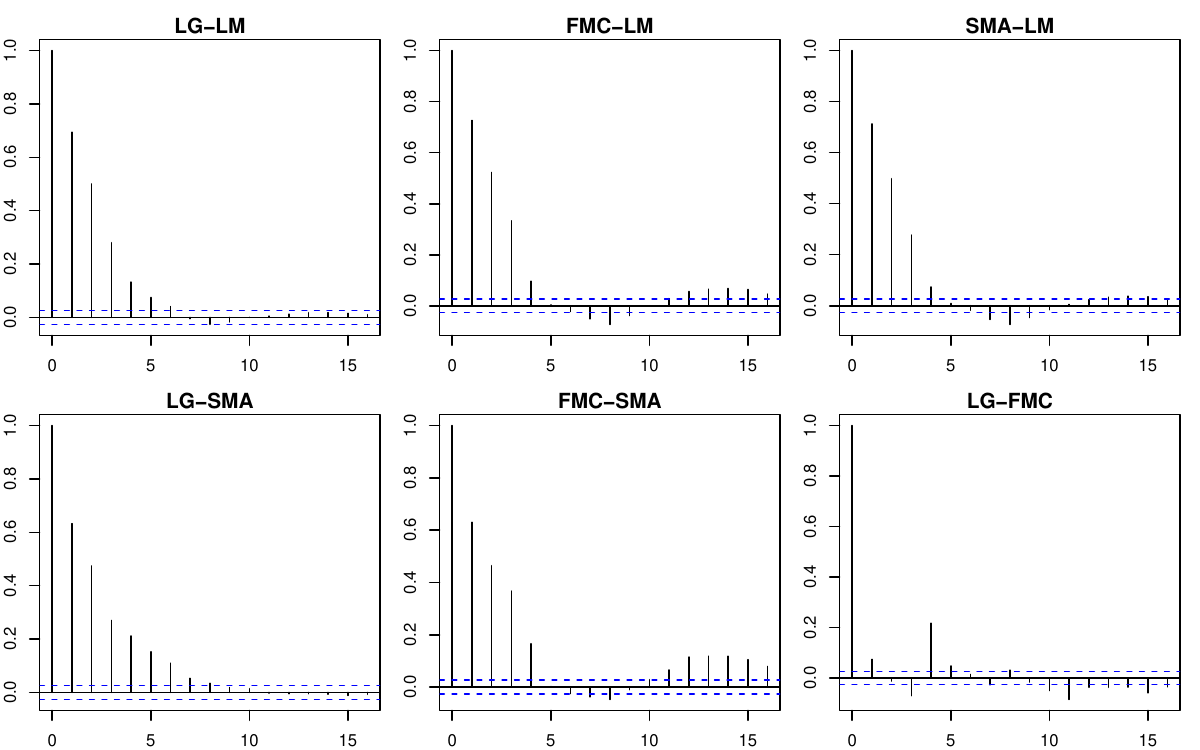}
\caption{Same as Figure \ref{fig:pois}, but under the quadratic score  \label{fig:quad}}
\end{figure}

{\small
\bibsep=0pt
\setlength{\bibsep}{0pt}

\renewcommand\refname{References (Supplement)}

}

\end{document}